\theoremstyle{plain}
\newtheorem{theorem}{Theorem}
\newtheorem{lemma}{Lemma}
\newtheorem{proposition}{Proposition}
\theoremstyle{definition}
\newtheorem{definition}{Definition}
\newtheorem{example}{Example}
\theoremstyle{remark}
\newtheorem{remark}{Remark}
\DeclareMathOperator{\trace}{Tr}
\DeclareMathOperator*{\essinf}{ess\,inf}
\DeclareMathOperator{\interior}{int}
\DeclareMathOperator{\dist}{dist}
\author{M.V. Dolgopolik}
\title{Exact Penalty Functions with Multidimensional Penalty Parameter and Adaptive Penalty Updates}
\begin{document}

\maketitle

\begin{abstract}
We present a general theory of exact penalty functions with vectorial (multidimensional) penalty parameter for
optimization problems in infinite dimensional spaces. In comparison with the scalar case, the use of vectorial penalty
parameters provides much more flexibility, allows one to adaptively and independently take into account the violation of
each constraint during optimization process, and often leads to a better overall performance of an optimization
method using an exact penalty function. We obtain sufficient conditions for the local and global exactness of penalty
functions with vectorial penalty parameters and study convergence of global exact penalty methods with several
different penalty updating strategies. In particular, we present a new algorithmic approach to an analysis of the global
exactness of penalty functions, which contains a novel characterisation of the global exactness property in terms of
behaviour of sequences generated by certain optimization methods.
\end{abstract}

\section{Introduction}

Since their introduction by Eremin \cite{Eremin} and Zangwill \cite{Zangwill} in the mid 1960s, exact penalty functions
became one of the standard tools of constrained optimization. They are used within trust region methods
\cite{ConnGouldToint}, sequential quadratic and sequential linear-quadratic programming methods
\cite{ByrdNocedalWaltz06}, DC optimization \cite{LeThiPhamDinhNgai,Strekalovsky}, mixed-integer programming
\cite{LucidiRinaldi13}, global optimization \cite{DiPilloLucidiRinaldi}, etc. The most important
property of such penalty functions is \textit{exactness}, which allows one to reduce (locally or globally) a constrained
optimization problem to a completely equivalent unconstrained problem of minimizing an exact penalty function. Various
sufficient conditions for the local and global exactness of penalty functions were studied, e.g., in
\cite{HanMagasarian,DiPilloGrippo,RubinovYang,Zaslavski,Zaslavski2013,Demyanov2010,Dolgopolik_ExPenFunc}.

Although exact penalty functions are usually studied and applied in the finite dimensional case, some effort has been
put to analyse their behaviour for infinite dimensional problems. Sufficient conditions for the global exactness of
penalty functions in the infinite dimensional case were obtained by Zaslavski \cite{Zaslavski,Zaslavski2013} and
Demyanov \cite{Demyanov2010}. Later on, sufficient conditions from \cite{Demyanov2010} were significantly improved in
\cite{Dolgopolik_ExPenFunc,Dolgopolik_ExPenFuncII,DolgopolikFominyh}. However, existing sufficient conditions for the
global exactness of penalty functions in the infinite dimensional case impose very restrictive assumptions on
constraints of a problem under consideration (such as the Palais-Smale condition from \cite{Zaslavski,Zaslavski2013} or
conditions ensuring semiglobal metric (sub-)regularity of constraints from
\cite{Demyanov2010,Dolgopolik_ExPenFunc,Dolgopolik_ExPenFuncII,DolgopolikFominyh}), that are very hard to verify and are
not satisfied for many particular problems (cf. \cite{DolgopolikFominyh,Dolgopolik_Control}).

It should be noted that in almost all existing theoretical and applied results on penalty functions, only penalty
functions with a scalar penalty parameter (i.e. the same penalty parameter for all constraints) are considered.
Nevertheless, it is known that the use of a multidimensional/vectorial penalty parameter (i.e. the use of an individual
penalty parameter for each constraint) can noticeably improve overall performance of a numerical method based on an
exact penalty function. When a vectorial penalty parameter is used, one can adaptively adjust each individual penalty
parameter using the information on how much the corresponding constraint is violated, which can lead to better
conditioned subproblems for computing the next iterate. It might also increase the rate of convergence, since 
the violation of ``almost satisfied'' constraints is not penalized as harshly as in the case of traditional penalty
functions. However, relatively little research on penalty functions with multidimensional penalty parameter has been
done over the years. 

Lipp and Boyd \cite{LippBoyd} considered such penalty functions in the context of DC algorithm/convex-concave procedure
for cone constrained DC optimization problems, to reformulate a penalty subproblem as a convex programming problem to
which interior point methods can be applied. In the recent paper \cite{BurachikKayaPrice}, Burachick, Kaya, and Price
studied a primal-dual penalty method based on a smoothing approximation to an $\ell_1$ penalty function with vectorial
penalty parameter. The results of numerical experiments reported in \cite{BurachikKayaPrice} demonstrate that a simple 
primal-dual penalty method using vectorial penalty parameters significantly outperforms some state-of-the-art local 
optimization solvers, both in terms of computation time and quality of computed local solutions.

The main goal of this paper is to extend existing results on exact penalty functions with a single penalty
parameter to the case of penalty functions with a vectorial/multidimensional penalty parameter and present a new
\textit{algorithmic} approach to an analysis of the global exactness of penalty functions for optimization problems in
infinite dimensional spaces, which is not based on any restrictive assumptions on constraints of an optimization
problem.

In the first part of the paper, we give a general definition of penalty function with vectorial penalty parameter, 
provide some natural examples of such functions, and present several extensions of existing results on exact penalty 
functions with scalar penalty parameter to the vectorial case. In the second part of the paper, we present a new 
algorithmic approach to globally exact penalty functions for infinite
dimensional problems. Instead of imposing some restrictive (semi-)global assumptions on constraints as in
\cite{Zaslavski,Zaslavski2013,Demyanov2010,Dolgopolik_ExPenFunc,Dolgopolik_ExPenFuncII,DolgopolikFominyh}, we
demonstrate that global exactness of a penalty function can be completely characterised in terms of behaviour of
sequences generated by global exact penalty methods (namely, the existence of limit points of such sequences). We also
study behaviour of global exact penalty methods with several different types of penalty updates, including adaptive
penalty updates for penalty functions with vectorial penalty parameter (inspired by the penalty updating strategy from
\cite{BurachikKayaPrice}), which automatically adjust individual penalty parameters in accordance with the degree of
violation of corresponding constraints. We prove the global convergence of such methods, which gives one hope that
similar adaptive penalty updating strategies can be successfully used within local optimization methods based on exact
penalty functions.

The paper is organized as follows. A general definition of penalty function with vectorial (multidimensional) penalty
parameter and definitions of its local and global exactness and vectorial exact penalty parameter are given in
Section~\ref{sect:Preliminaries}. In this section we also extend standard sufficient conditions for the local exactness
to the case of penalty functions with vectorial penalty parameter and obtain necessary and sufficient conditions for
their global exactness in the finite dimensional case. New algorithmic necessary and sufficient conditions for the
global exactness of penalty functions in the infinite dimensional case, as well as a convergence analysis of related
global exact penalty methods with several penalty updating strategies, are presented in
Section~\ref{sect:GlobalExactness}.

\section{Exact Penalty Functions with Vectorial Penalty Parameter}
\label{sect:Preliminaries}

Let $(X, d)$ be a metric space, $M, Q \subseteq X$ be some sets having nonempty intersection, and 
$f \colon X \to \mathbb{R} \cup \{ + \infty \}$ be a given function. Throughout this article we study penalty functions
for the following optimization problem:
\[
  \text{minimize} \enspace f(x) \quad \text{subject to} \quad x \in M \cap Q.
  \eqno{(\mathcal{P})}
\]
Below we always suppose that there exists a globally optimal solution of this problem and the optimal value is finite.

The sets $M$ and $Q$ represent two different types of constraints of the problem $(\mathcal{P})$, e.g. equality and
inequality constraints, nonlinear and linear constraints, nonconvex and convex constraints, etc. Let us define a penalty
term for the constraint $x \in M$ and a corresponding penalty function. 

Usually, one supposes that a penalty term is a nonnegative real-valued function 
$\varphi \colon X \to \mathbb{R} \cup \{ + \infty \}$ such that $\varphi(x) = 0$ iff $x \in M$, and defines the penalty
function as $f + c \varphi$, where $c \ge 0$ is a penalty parameter
(cf.~\cite{HanMagasarian,DiPilloGrippo,RubinovYang,Zaslavski,DiPilloLucidiRinaldi}). 
Being inspired by the ideas of Lipp and Boyd \cite{LippBoyd}, we define a penalty function with vectorial penalty
parameter as follows. Let $Y$ be a real normed space, and $K \subset Y$ be a proper cone, that is, the cone $K$ is
closed, convex, and pointed (i.e. $K \cap (- K) = \{ 0 \}$). Denote by $\preceq_K$ the partial order induced by the cone
$K$, i.e. $y_1 \preceq_K y_2$ iff $y_2 - y_1 \in K$. We add improper element $\infty$ to the space $Y$, corresponding
to 
the value $+ \infty$ in the scalar case. By definition $\| \infty \| = + \infty$. 

Let $\varphi \colon X \to K \cup \{ \infty \}$ be a given function such that $\varphi(x) = 0$ iff $x \in M$. The
function $\varphi$ is called a $K$-\textit{valued} penalty term (for the constraint $x \in M$).

Let $Y^*$ be the topological dual space of $Y$, and $\langle \cdot, \cdot \rangle$ be the canonical duality pairing
between $Y^*$ and $Y$. Denote by $K^* = \{ y^* \in Y^* \mid \langle y^*, y \rangle \ge 0 \: \forall y \in K \}$ 
\textit{the dual cone} of $K$, and let $K^*_+$ be the set of all those $y^* \in K^*$ for which 
$\langle y^*, y \rangle > 0$ for all $y \in K \setminus \{ 0 \}$. Hereinafter, we suppose that $K^*_+ \ne \emptyset$.
Note that the condition $K^*_+ \ne \emptyset$ is nothing but the assumption that there exists a strictly positive 
continuous linear functional on the normed lattice $(Y, \preceq_K)$. Sufficient conditions for the existence of such
functionals can be found in \cite{vanCasteren,FernandezNaranjo}.

Choose any nonempty set $\mathcal{T} \subset K^*_+$ such that $\alpha \tau \in \mathcal{T}$ for all 
$\tau \in \mathcal{T}$ and $\alpha > 0$. For all $\tau \in \mathcal{T}$ and $x \in X$ define 
$\Phi_{\tau}(x) = f(x) + \langle \tau, \varphi(x) \rangle$. Here by definition 
$\langle \tau, \infty \rangle = + \infty$. The function $\Phi_{\tau}$ is called \textit{a penalty function}
for the problem $(\mathcal{P})$ with $\tau$ being a (vectorial) \textit{penalty parameter}. Observe that
$\Phi_{\tau}(x) \ge f(x)$ for all $x \in X$, since $\varphi(x) \in K \cup \{ \infty \}$ and $\tau \in K^*$.

For any $\tau \in \mathcal{T}$ consider the following penalized problem:
\[
  \text{minimize} \enspace \Phi_{\tau}(x) \quad \text{subject to} \quad x \in Q.
  \eqno{(\mathcal{P}_{\tau})}
\]
Observe that we incorporated the constraint $x \in M$ of the problem $(\mathcal{P})$ into the objective function of 
the penalized problem. Before we proceed to an analysis of the problem $(\mathcal{P}_{\tau})$, let us give
several simple examples illuminating the definition of penalty function with vectorial penalty parameter. Firstly,
note that putting $Y = \mathbb{R}$, $K = K^* = [0, + \infty)$, and $\mathcal{T} = K^*_+ = (0, + \infty)$ one obtains
traditional penalty functions with a single penalty parameter.

\begin{example} \label{example:MathProgram}
Consider the nonlinear programming problem
\[
  \min \: f(x) \quad \text{s.t.} \quad g_i(x) \le 0, \quad i \in I, \quad g_j(x) = 0, \quad j \in J,
  \quad x \in Q
\]
where $g_i \colon X \to \mathbb{R}$, $I = \{ 1, \ldots, m_1 \}$, and $J = \{ m_1 + 1, \ldots, m_2 \}$ for some numbers 
$m_1, m_2 \in \mathbb{N}$, $m_2 \ge m_1$. In this case
\[
  M = \Big\{ x \in X \Bigm| g_i(x) \le 0, \quad i \in I, \quad g_j(x) = 0, \quad j \in J \Big\}.
\]
Let $Y = \mathbb{R}^{m_2}$ and $K = \mathbb{R}^{m_2}_+$ be the nonnegative orthant. Then $K^*_+$ consists of all
vectors $x \in \mathbb{R}^{m_2}$ with positive coordinates, and it is natural to define $\mathcal{T} = K^*_+$. One can
set
\[
  \varphi(x) 
  = \Big( \max\{ 0, g_1(x) \}, \ldots, \max\{ 0, g_{m_1}(x) \}, |g_{m_1 + 1}(x)|, \ldots, |g_{m_2}(x)| \Big)^T
\]
for all $x \in X$. Then for any $\tau \in K^*_+$ one has
\[
  \Phi_{\tau}(x) = f(x) + \sum_{i = 1}^{m_1} \tau_i \max\{ 0, g_i(x) \}
  + \sum_{j = m_1 + 1}^{m_2} \tau_j |g_j(x)| \quad \forall x \in X,
\]
that is, $\Phi_{\tau}$ is the standard $\ell_1$ penalty function for nonlinear programming problems with individual
penalty parameter for each constraint.
\end{example}

\begin{example} \label{example:ConeConstrained}
Let $H$ be a real Hilbert space and $\mathcal{K} \subset H$ be a closed convex cone. Denote by 
$\mathcal{K}^{\circ} = \{ h \in H \mid \langle h, z \rangle \le 0 \: \forall z \in \mathcal{K} \}$ 
\textit{the polar cone} of $\mathcal{K}$. Consider the cone constrained optimization problem
\begin{equation} \label{eq:ConeConstrProblem}
  \min \: f(x) \quad \text{s.t.} \quad G(x) \in \mathcal{K}, \quad x \in Q,
\end{equation}
where $G \colon X \to H$ is a given function. Define $M = \{ x \in X \mid G(x) \in \mathcal{K} \}$, $Y = H$, and 
$K = \mathcal{K}^{\circ}$. In this case $K^*_+$ consists of all those $y \in -\mathcal{K}$ for which 
$\langle y, z \rangle > 0$ for all $z \in K \setminus \{ 0 \}$. We set $\mathcal{T} = K^*_+$.

By the well-known Moreau theorem \cite{Moreau,Soltan}, a point $y \in Y$ belongs to $\mathcal{K}$ iff the metric
projection of $y$ onto the cone $K = \mathcal{K}^{\circ}$, denoted by $Pr_K(y)$, is zero. Therefore one can define
$\varphi(x) = Pr_K(G(x))$ for all $x \in X$ and, if $K^*_+ \ne \emptyset$, define
\[
  \Phi_{\tau}(x) = f(x) + \langle \tau, Pr_K(G(x)) \rangle \quad \forall x \in X, \: \tau \in K^*_+.
\]
In particular, let $H = \mathbb{S}^{\ell}$ be the space of real symmetric matrices of order $\ell \in \mathbb{N}$
equipped with the inner product $\langle A, B \rangle = \trace(A B)$ and the corresponding norm, which is called the
Frobenius norm (here $\trace(A)$ is the trace of a matrix $A$). Let $\mathcal{K} = \mathbb{S}^{\ell}_-$ be the cone of
negative semidefinite matrices. Then problem \eqref{eq:ConeConstrProblem} is the standard nonlinear semidefinite
programming problem of the form
\[
  \min \: f(x) \quad \text{s.t.} \quad G(x) \preceq 0, \quad x \in Q,
\]
where the relation $G(x) \preceq 0$ means that the matrix $G(x) \in \mathbb{S}^{\ell}$ is negative semidefinite. In this
case $K = \mathbb{S}^{\ell}_+$ is the cone of positive semidefinite matrices, while $K^*_+ = \mathcal{T}$ is the cone
of positive definite matrices. Furthermore, one has
\[
  \varphi(x) = [G(x)]_+, \quad \Phi_{\tau}(x) = f(x) + \trace\big( \tau [G(x)]_+ \big) \quad \forall x \in X,
\]
where $[A]_+$ is the metric projection of a matrix $A$ onto the cone of positive semidefinite matrices. In this case
vectorial penalty parameter $\tau$ is any positive definite matrix (cf.~\cite{LippBoyd}).
\end{example}

\begin{example} \label{example:OptimalControl}
Consider the following optimal control problem with a pointwise state constraint:
\begin{align*}
  &\text{minimize} \enspace \mathcal{I}(x, u) = \int_0^T L(x(t), u(t), t) \, dt 
  \\ 
  &\text{subject to} \quad \dot{x}(t) = F(x(t), u(t), t), \quad u(t) \in U(t) \quad \text{for a.e. } t \in [0, T], 
  \\
  &x(0) = x_0, \quad x(T) = x_T, \quad g(x(t), t) \le 0 \quad \forall t \in [0, T]
\end{align*}
(for the sake of shortness, we suppose that there is only one state constraint). Here $x \colon [0, T] \to \mathbb{R}^d$
belongs to the space of absolutely continuous on $[0, T]$ vector-valued functions $AC([0, T]; \mathbb{R}^d)$ and 
$u \in L^{\infty}([0, T]; \mathbb{R}^m)$, while $L$, $F$, and $g$ are continuous functions. We would like to convert
this problem to an optimal control problem without state constraints via penalty functions. To this end, define
$X = AC([0, T]; \mathbb{R}^d) \times L^{\infty}([0, T]; \mathbb{R}^m)$, and introduce the sets
$M = \{ (x, u) \in X \mid g(x(t), t) \le 0 \enspace \forall t \in [0, T] \}$ and
\begin{multline*}
  Q = \Big\{ (x, u) \in X \Bigm| \dot{x}(t) = F(x(t), u(t), t), \enspace u(t) \in U(t) \text{ for a.e. } t \in [0, T],
  \\
  x(0) = x_0, \quad x(T) = x_T \Big\}.
\end{multline*}
Let $Y = C[0, T]$ be the space of continuous functions, and $K \subset Y$ be the cone of nonnegative functions. Then by
the Reisz-Markov-Kakutani theorem, the dual cone $K^*$ can be identified with the set of regular Borel measures on 
$[0, T]$, while the set $K^*_+$ contains, in particular, all regular Borel measures that are absolutely continuous with
respect to the Lebesgue measure and have a.e. positive density. We denote by $\mathcal{T}$ the set of such Borel
measures, which can obviously be identified with the set of a.e. positive Lebesgue integrable functions 
$\tau \colon [0, T] \to (0, + \infty)$.

Define $\varphi(x, u) = \max\{ 0, g(x(t), t) \}$. Then
\[
  \Phi_{\tau}(x, u) = \mathcal{I}(x, u) + \int_0^T \tau(t) \max\{ 0, g(x(t), t) \} dt \quad \forall (x, u) \in X
\]
for any positive Lebesgue integrable function $\tau \in \mathcal{T}$. In this example, the use of the set $\mathcal{T}$
instead of $K^*_+$ allows one to exclude irregular penalty parameters (i.e. Borel measures with nonzero discrete and/or
singular parts) from consideration.
\end{example}

Let us now turn to an analysis of the exactness properties of the penalty function $\Phi_{\tau}$. We start by extending
the definition of local exactness (cf.~\cite{HanMagasarian,Dolgopolik_ExPenFunc}) to the case of penalty functions with
vectorial penalty parameter. Recall that $\preceq_{K^*}$ is the partial order induced by the cone $K^*$, that is,
$\tau_1 \preceq_{K^*} \tau_2$ if and only if $\langle \tau_1, y \rangle \le \langle \tau_2, y \rangle$ for all $y \in
K$.

\begin{definition}
Let $x_*$ be a locally optimal solution of the problem $(\mathcal{P})$. The penalty function $\Phi_{\tau}$ is called
\textit{locally exact} at the point $x_*$, if there exists $\tau_* \in \mathcal{T}$ such that for all 
$\tau \in \mathcal{T}$ satisfying the condition $\tau \succeq_{K^*} \tau_*$, the point $x_*$ is a locally optimal
solution of the penalized problem $(\mathcal{P}_{\tau})$. Any such $\tau_*$ is called 
\textit{a local exact penalty parameter} at $x_*$.
\end{definition}

If the penalty function $\Phi_{\tau}$ is locally exact at $x_*$, then by definition there exists 
$\tau_* \in \mathcal{T}$ and a neighbourhood $U$ of $x_*$ such that $\Phi_{\tau_*}(x) \ge \Phi_{\tau_*}(x_*)$ 
for all $x \in U \cap Q$. Observe that $\Phi_{\tau}(x_*) = f(x_*)$ for all $\tau \in \mathcal{T}$ due to the fact that
$x_*$ is a feasible point of the problem $(\mathcal{P})$, i.e. $\varphi(x_*) = 0$. Therefore, for 
any $\tau \succeq_{K^*} \tau_*$ one has
\[
  \Phi_{\tau}(x) \ge \Phi_{\tau_*}(x) \ge f(x) = \Phi_{\tau}(x_*) \quad \forall x \in U \cap Q.
\]
Note that the neighbourhood $U$ is the same for all $\tau$. In other words, for any $\tau \succeq_{K^*} \tau_*$ the
point $x_*$ is a local, \textit{uniformly} with respect to $\tau \succeq_{K^*} \tau_*$, minimizer of the penalized
problem $(\mathcal{P}_{\tau})$.

Let us extend standard sufficient conditions for the local exactness of penalty functions
(cf.~\cite{HanMagasarian,Dolgopolik_ExPenFunc}) to the vectorial case, by showing that the metric subregularity of
constraints along with the Lipschitz continuity of the objective function guarantees the local
exactness of the penalty function $\Phi_{\tau}$.

Let $K^*_{++}$ be the set of all those $\tau \in K^*_+$ for which there exists $c > 0$ such that
$\langle \tau, y \rangle \ge c \| y \|$ for all $y \in K$. The supremum of all such $c$ is denoted by
$p_K(\tau)$. The function $p_K(\cdot)$ is obviously positively homogeneous. Moreover, the following equality
holds true:
\[
  p_K(\tau) = \inf\Big\{ \langle \tau, y \rangle \Bigm| y \in K, \: \| y \| = 1 \Big\}.
\]
Note that if the cone $K$ is finite dimensional, then this infimum is attained and positive for any 
$\tau \in K^*_+$, since in this case the set $\{ y \in K \mid \| y \| = 1 \}$ is compact and 
$\langle \tau, y \rangle > 0$ for any vector $y$ from this set. Thus, $K^*_{++} = K^*_+$ in the case when the cone $K$
is finite dimensional. However, in the general case this equality does not hold true. In particular, in
Example~\ref{example:OptimalControl} the set $\mathcal{T} \cap K^*_{++}$ consists of all Lebesgue integrable functions
$\tau \colon [0, T] \to (0, + \infty)$ with positive essential infimum on $[0, T]$, and 
$p_K(\tau) = \essinf_{t \in [0, T]} \tau(t)$.

\begin{proposition} \label{prp:LocalExactness}
Let $\mathcal{T} \cap K^*_{++} \ne \emptyset$, $x_*$ be a locally optimal solution of the problem $(\mathcal{P})$, and
$f$ be H\"{o}lder continuous with constant $L > 0$ and exponent $\alpha > 0$ near $x_*$. Suppose also that there exist
$\eta > 0$ and a neighbourhood $U$ of $x_*$ such that
\begin{equation} \label{eq:LocalErrorBound}
  \| \varphi(x) \| \ge \eta \Big( \dist(x, M \cap Q) \Big)^{\alpha} \quad \forall x \in U \cap Q.
\end{equation}
Then the penalty function $\Phi_{\tau}$ is locally exact at $x_*$ 
with local exact penalty parameter $(L / \eta p_K(\tau)) \tau$ for any $\tau \in \mathcal{T} \cap K^*_{++}$.
\end{proposition}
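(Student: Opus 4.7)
The plan is to fix $\tau \in \mathcal{T} \cap K^*_{++}$, set $\tau_* = (L/\eta p_K(\tau))\,\tau$, and verify the definition of local exactness directly: for any $\sigma \in \mathcal{T}$ with $\sigma \succeq_{K^*} \tau_*$, show that $x_*$ locally minimizes $\Phi_\sigma$ on $Q$. Note $\tau_* \in \mathcal{T}$ because $\mathcal{T}$ is closed under multiplication by positive scalars, and $\tau_* \in K^*_{++}$ since $K^*_{++}$ is a cone.

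First I would fix a neighbourhood $U_0$ of $x_*$ on which three properties hold simultaneously: $f$ is H\"older with constants $L,\alpha$; the error bound \eqref{eq:LocalErrorBound} is valid; and $f(y) \ge f(x_*)$ for all $y \in U_0 \cap M \cap Q$ (local optimality of $x_*$). Since $\dist(x, M \cap Q) \le d(x, x_*)$, by choosing a smaller neighbourhood $U \subset U_0$ (e.g.\ a ball of one third the radius of $U_0$) I can guarantee that for every $x \in U$ and every sufficiently small $\epsilon > 0$ there is a point $y \in M \cap Q$ with $d(x, y) \le \dist(x, M \cap Q) + \epsilon$ which still lies in $U_0$.

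Now fix $x \in U \cap Q$ and $\sigma \succeq_{K^*} \tau_*$. If $\varphi(x) = \infty$ the inequality $\Phi_\sigma(x) \ge f(x_*)$ is trivial; if $x \in M$ then $\varphi(x)=0$, $\Phi_\sigma(x) = f(x) \ge f(x_*) = \Phi_\sigma(x_*)$, and we are done. In the remaining case $\varphi(x) \in K \setminus \{0\}$. Here the key chain of inequalities combines three ingredients: from $\sigma - \tau_* \in K^*$ and $\varphi(x) \in K$ I get $\langle \sigma, \varphi(x)\rangle \ge \langle \tau_*, \varphi(x)\rangle$; from the defining bound of $p_K(\tau)$ on $K^*_{++}$,
\[
  \langle \tau_*, \varphi(x)\rangle = \frac{L}{\eta\, p_K(\tau)} \langle \tau, \varphi(x)\rangle \ge \frac{L}{\eta} \| \varphi(x)\|;
\]
and from \eqref{eq:LocalErrorBound}, $\|\varphi(x)\| \ge \eta\, \dist(x, M \cap Q)^{\alpha}$. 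Thus $\langle \sigma, \varphi(x)\rangle \ge L\, \dist(x, M \cap Q)^{\alpha}$.

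Finally I exploit H\"older continuity to bound $f(x)$ from below in terms of $f(x_*)$. For any small $\epsilon > 0$ pick $y_\epsilon \in M \cap Q \cap U_0$ as above; then $f(y_\epsilon) \ge f(x_*)$ and $f(x) \ge f(y_\epsilon) - L\, d(x, y_\epsilon)^{\alpha} \ge f(x_*) - L\, (\dist(x, M \cap Q) + \epsilon)^{\alpha}$. Adding the lower bound on $\langle \sigma, \varphi(x)\rangle$ gives
\[
  \Phi_\sigma(x) \ge f(x_*) + L\, \dist(x, M \cap Q)^{\alpha} - L\, (\dist(x, M \cap Q) + \epsilon)^{\alpha},
\]
and letting $\epsilon \to 0^+$ yields $\Phi_\sigma(x) \ge f(x_*) = \Phi_\sigma(x_*)$, establishing local exactness with parameter $\tau_*$. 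The only subtlety, which I see as the main technical point to get right, is the neighbourhood juggling in the H\"older step: one must shrink $U$ enough so that a nearly-projecting point of $x$ onto $M \cap Q$ still lies in the region where $f$ is H\"older and $x_*$ is optimal, yet the choice of $y_\epsilon$ cannot be made uniformly because $M \cap Q$ need not be closed. The $\epsilon$-approximation followed by a limit circumvents this cleanly.
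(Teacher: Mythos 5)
Your proof is correct, and it takes a more self-contained route than the paper's. The paper reduces everything to the scalar case: it sets $\psi(\cdot) = \| \varphi(\cdot) \|$, invokes a previously published theorem (Thrm.~2.4 and Prp.~2.7 of the cited reference on exact penalty functions) stating that under \eqref{eq:LocalErrorBound} and H\"older continuity the scalar penalty function $f + c \psi$ is locally exact at $x_*$ with parameter $L/\eta$, and then transfers the conclusion to $\Phi_{\tau}$ via the single inequality $\Phi_{\tau}(x) \ge f(x) + p_K(\tau) \| \varphi(x) \|$. You instead unfold the scalar argument in full: the $\epsilon$-approximate projection onto $M \cap Q$, the neighbourhood shrinking that keeps this approximate projection inside the region where $f$ is H\"older and $x_*$ is optimal, and the limit $\epsilon \to 0^+$ are precisely the content of the cited scalar result, while your chain $\langle \sigma, \varphi(x) \rangle \ge \langle \tau_*, \varphi(x) \rangle \ge (L/\eta) \| \varphi(x) \| \ge L \, \dist(x, M \cap Q)^{\alpha}$ is the same comparison mechanism the paper uses implicitly. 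What your version buys is independence from the external reference and an explicit treatment of the genuine subtlety that $M \cap Q$ need not be closed, so no exact nearest point is available; what the paper's version buys is brevity and a clean separation between the vectorial reduction (the only genuinely new ingredient here) and the known scalar theory.
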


\begin{proof}
Denote $\psi(x) = \| \varphi(x) \|$. Then by definition $\psi(x) = 0$ iff $x \in M$, and 
$\psi(x) \ge \eta (\dist(x, M \cap Q))^{\alpha}$ for all $x \in U \cap Q$. Therefore, by
\cite[Thrm.~2.4 and Prp.~2.7]{Dolgopolik_ExPenFunc} the penalty function $f + c \psi$ is exact at $x_*$ with exact
penalty parameter $L / \eta$. Hence with the use of the inequality $\Phi_{\tau}(x) \ge f + p_K(\tau) \psi$ we arrive
at the required result.
\end{proof}

\begin{remark}
Suppose that in Example~\ref{example:ConeConstrained} $X$ is a Banach space and the function $G$ is continuously
Fr\'{e}chet differentiable at a locally optimal solution $x_*$ of the problem $(\mathcal{P})$. Then, as is well-known,
the validity of Robinson's constraint qualification
\[
  0 \in \interior\Big\{ G(x_*) + DG(x_*)(Q - x_*) - \mathcal{K} \Big\}
\]
implies that inequality \eqref{eq:LocalErrorBound} with $\alpha = 1$ holds true (see, e.g.
\cite[Crlr.~2.2]{Cominetti}). Thus, in this case the penalty function $\Phi_{\tau}$ is locally exact at $x_*$,
provided Robinson's constraint qualification holds at $x_*$ and the objective function $f$ is Lipschitz continuous near
this point. More generally, it is sufficient to suppose that the multifunction 
\[
  H(x) = \begin{cases}
    G(x) - \mathcal{K}, & \text{if } x \in Q, \\
    \emptyset, & \text{if } x \notin Q
  \end{cases}
\]
is metrically subregular near $(x_*, 0)$ and $f$ is Lipschitz continuous near $x_*$.
\end{remark}

Let us now consider globally exact penalty functions.

\begin{definition} \label{def:GlobalExactness}
The penalty function $\Phi_{\tau}$ is said to be \textit{globally exact}, if there exists $\tau_* \in \mathcal{T}$ such
that for all $\tau \succeq_{K^*} \tau_*$ the set of globally optimal solutions of the problem $(\mathcal{P})$ coincides
with the set of globally optimal solutions of the penalized problem $(\mathcal{P}_{\tau})$. Any such $\tau_*$ is
called a (global) \textit{exact penalty parameter}.
\end{definition}

It should be noted that instead of verifying that the sets of globally optimal solutions of the problems
$(\mathcal{P})$ and $(\mathcal{P}_{\tau})$ coincide, it is sufficient to check that these problems have the same
optimal value.

\begin{lemma} \label{lemma:ExactnessViaOptValue}
The penalty function $\Phi_{\tau}$ is globally exact iff there exists $\tau_* \in \mathcal{T}$ such that the optimal
value of the problem $(\mathcal{P})$ coincides with the optimal value of the problem $(\mathcal{P}_{\tau_*})$. Moreover,
any $\tau \succeq_{K^*_+} \tau_*$ (i.e. $\tau - \tau_* \in K^*_+$) is an exact penalty parameter of $\Phi_{\tau}$.
\end{lemma}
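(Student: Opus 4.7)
The forward direction is immediate from Definition~\ref{def:GlobalExactness}, so the content is in the converse and in the claim about penalty parameters in $K^*_+ + \tau_*$. My plan is to compare the optimal values of the problems $(\mathcal{P})$ and $(\mathcal{P}_\tau)$, exploiting the monotonicity of $\tau \mapsto \Phi_\tau$ and the strict positivity of functionals in $K^*_+$.

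Let $f_* = \inf_{x \in M \cap Q} f(x)$ and let $\Phi_*(\tau) = \inf_{x \in Q} \Phi_\tau(x)$. First I would observe the two inequalities that drive the whole argument. On one hand, for any feasible $x \in M \cap Q$ we have $\varphi(x) = 0$ and hence $\Phi_\tau(x) = f(x)$; taking infimum over $M \cap Q \subseteq Q$ yields $\Phi_*(\tau) \le f_*$ for every $\tau \in \mathcal{T}$. On the other hand, if $\tau \succeq_{K^*} \tau_*$, then $\tau - \tau_* \in K^*$ and $\varphi(x) \in K \cup \{\infty\}$, so $\langle \tau - \tau_*, \varphi(x)\rangle \ge 0$ and consequently $\Phi_\tau(x) \ge \Phi_{\tau_*}(x)$ for all $x \in X$. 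In particular, if $\Phi_*(\tau_*) = f_*$, then $\Phi_*(\tau) \ge \Phi_*(\tau_*) = f_*$, and combining with the first inequality gives $\Phi_*(\tau) = f_*$ for every $\tau \succeq_{K^*} \tau_*$.

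Now suppose $\tau \succeq_{K^*_+} \tau_*$, i.e.\ $\tau - \tau_* \in K^*_+$. I would prove that the optimal sets of $(\mathcal{P})$ and $(\mathcal{P}_\tau)$ coincide. The inclusion ``$\subseteq$'' is easy: if $x^*$ is globally optimal for $(\mathcal{P})$, then $x^* \in Q$ and $\Phi_\tau(x^*) = f(x^*) = f_* = \Phi_*(\tau)$, so $x^*$ is globally optimal for $(\mathcal{P}_\tau)$. For ``$\supseteq$'', let $x^*$ be globally optimal for $(\mathcal{P}_\tau)$. Then $x^* \in Q$ and $\Phi_\tau(x^*) = \Phi_*(\tau) = f_*$, so in particular $\varphi(x^*) \ne \infty$, i.e.\ $\varphi(x^*) \in K$. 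Writing
\[
  f_* = \Phi_\tau(x^*) = \Phi_{\tau_*}(x^*) + \langle \tau - \tau_*, \varphi(x^*)\rangle \ge f_* + \langle \tau - \tau_*, \varphi(x^*)\rangle,
\]
where I used $\Phi_{\tau_*}(x^*) \ge \Phi_*(\tau_*) = f_*$, yields $\langle \tau - \tau_*, \varphi(x^*)\rangle \le 0$. Since $\tau - \tau_* \in K^*_+$ and $\varphi(x^*) \in K$, the pairing is nonnegative and, by the definition of $K^*_+$, strictly positive whenever $\varphi(x^*) \ne 0$. Therefore $\varphi(x^*) = 0$, i.e.\ $x^* \in M$, so $x^* \in M \cap Q$ and $f(x^*) = \Phi_\tau(x^*) = f_*$, as required.

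The only genuinely delicate step is the final implication $\varphi(x^*) = 0$: this is precisely where the assumption $\tau - \tau_* \in K^*_+$ (rather than merely $\tau - \tau_* \in K^*$) is indispensable, because without strict positivity on $K \setminus \{0\}$ one cannot rule out $\varphi(x^*) \ne 0$. Once this is in place, $\tau_*$ is shown to be a global exact penalty parameter and the proof is complete.
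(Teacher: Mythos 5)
Your proof is correct and follows essentially the same route as the paper: both arguments rest on the monotonicity $\Phi_{\tau} \ge \Phi_{\tau_*}$ for $\tau \succeq_{K^*} \tau_*$, the identity $\Phi_{\tau}(x) = f(x)$ at feasible points, and the strict positivity of $\tau - \tau_* \in K^*_+$ on $K \setminus \{0\}$ to exclude infeasible minimizers. The only cosmetic difference is that you phrase the last step directly (deriving $\langle \tau - \tau_*, \varphi(x^*) \rangle \le 0$ and hence $\varphi(x^*) = 0$), whereas the paper argues in contrapositive form that every infeasible point satisfies $\Phi_{\tau}(x) > f_*$.
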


\begin{proof}
Bearing in mind the fact that for any feasible point (in particular, globally optimal solution) $x$ of the problem
$(\mathcal{P})$ one has $\varphi(x) = 0$ and $\Phi_{\tau}(x) = f(x)$, one gets that if the sets of globally optimal
solutions of the problems $(\mathcal{P})$ and $(\mathcal{P}_{\tau})$ coincide, then the optimal values of these
problems coincide as well.

Let us prove the converse statement. Suppose that for some $\tau_* \in \mathcal{T}$ the optimal values of the problems
$(\mathcal{P})$ and $(\mathcal{P}_{\tau_*})$ coincide. Then, in particular, for any globally optimal solution $x_*$ of
the problem $(\mathcal{P})$ and for any $\tau \succeq_{K^*} \tau_*$ one has
\[
  \inf_{x \in Q} \Phi_{\tau}(x) \ge \inf_{x \in Q} \Phi_{\tau_*}(x) = f(x_*) = \Phi_{\tau}(x_*),
\]
which implies that for any $\tau \succeq_{K^*} \tau_*$ the point $x_*$ is a globally optimal solution of the problem
$(\mathcal{P}_{\tau})$. On the other hand, for any $\tau \succeq_{K^*_+} \tau_*$ (e.g. for $\tau = 2 \tau_*$) and any
point $x$, that is infeasible for the problem $(\mathcal{P})$, one has $\varphi(x) \ne 0$ and
\[	
  \Phi_{\tau}(x) > \Phi_{\tau_*}(x) \ge \Phi_{\tau_*}(x_*),
\]
i.e. for any $\tau \succeq_{K^*_+} \tau_*$ globally optimal solutions of the problem $(\mathcal{P}_{\tau})$ must be
feasible for the problem $(\mathcal{P})$. Hence with the use of the fact that for any feasible point $x$ of the
problem $(\mathcal{P})$ and for any $\tau \in K^*$ one has $\Phi_{\tau}(x) = f(x)$, one can conclude that for all
$\tau \succeq_{K^*_+} \tau_*$ globally optimal solutions of the problems $(\mathcal{P})$ and
$(\mathcal{P}_{\tau})$ coincide, i.e. any such $\tau$ is an exact penalty parameter.
\end{proof}

Let us now turn to an analysis of necessary and/or sufficient conditions for the global exactness of the penalty
function $\Phi_{\tau}$. At first, let us point out an almost trivial, yet useful comparison principle that allows one to
prove the local/global exactness of a penalty function with vectorial penalty parameter by proving the local/global
exactness of the corresponding standard penalty function with scalar penalty parameter (cf. the proof of 
Proposition~\ref{prp:LocalExactness}). With the use of this principle one can apply existing conditions for the global
exactness of penalty functions to the penalty function $\Phi_{\tau}$. For the sake of shortness, we formulate the
comparison principle only for globally exact penalty functions.

\begin{lemma}[Comparison Principle for Penalty Functions]
Let $\mathcal{T} \subseteq K^*_{++}$. Then the penalty function $\Phi_{\tau}$ is globally exact if and only if the
penalty function $\Psi_c(\cdot) = f(\cdot) + c \| \varphi(\cdot) \|$, $c > 0$, is globally exact. Moreover, if $\tau_*$
is a global exact penalty parameter for $\Phi_{\tau}$, then $c_* = \| \tau_* \|_{Y^*}$, where $\| \tau_* \|_{Y^*}$ is
the
norm of $\tau_*$ in $Y^*$, is a global exact penalty parameter for $\Psi_c$. Conversely, if $c_*$ is a global exact
penalty parameter for $\Psi_c$, then any $\tau \in \mathcal{T}$ with $p_K(\tau) \ge c_*$ is a global exact penalty
parameter for $\Phi_{\tau}$.
\end{lemma}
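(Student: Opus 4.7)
The plan is to reduce everything to the basic norm sandwich
\[
  p_K(\tau)\,\|y\| \;\le\; \langle \tau, y\rangle \;\le\; \|\tau\|_{Y^*}\,\|y\| \qquad \forall y \in K, \: \tau \in K^*_{++},
\]
which follows from the definition of $p_K(\tau)$ on the lower side and from the definition of the dual norm on the upper side. Multiplying $\varphi(x) \in K \cup \{\infty\}$ through and adding $f(x)$ gives the pointwise inequality
\[
  \Psi_{p_K(\tau)}(x) \;\le\; \Phi_\tau(x) \;\le\; \Psi_{\|\tau\|_{Y^*}}(x) \qquad \forall x \in X,
\]
with equality $\Phi_\tau(x) = \Psi_c(x) = f(x)$ whenever $x \in M$. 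I would also record the elementary monotonicity $p_K(\tau') \ge p_K(\tau)$ for any $\tau' \succeq_{K^*} \tau$, since then $\langle \tau', y\rangle \ge \langle \tau, y\rangle$ on $K$. With this in hand, the whole proposition reduces to applications of Lemma~\ref{lemma:ExactnessViaOptValue} together with the above sandwich.

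For the forward direction, assume $\tau_*$ is a global exact penalty parameter for $\Phi_\tau$ and set $c_* = \|\tau_*\|_{Y^*}$. Fix an arbitrary $c \ge c_*$. The upper sandwich yields $\Phi_{\tau_*}(x) \le \Psi_c(x)$ on $Q$, so $\inf_Q \Psi_c \ge \inf_Q \Phi_{\tau_*}$, which equals the optimal value of $(\mathcal{P})$ by exactness of $\tau_*$ together with Lemma~\ref{lemma:ExactnessViaOptValue}. The reverse inequality $\inf_Q \Psi_c \le $ opt val of $(\mathcal{P})$ is immediate from $\Psi_c = f$ on $M \cap Q$. Hence optimal values coincide. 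To identify optimal \emph{solutions}, I would observe that if $x_*$ attains the infimum of $\Psi_c$, then $\Phi_{\tau_*}(x_*) \le \Psi_c(x_*)$ equals the optimal value of $(\mathcal{P})$, so $x_*$ is a minimizer of $\Phi_{\tau_*}$ on $Q$, and because $\tau_*$ is already an exact penalty parameter, $x_*$ must be a globally optimal solution of $(\mathcal{P})$ (in particular feasible). The converse inclusion is trivial. Thus $c_*$ is an exact penalty parameter for $\Psi_c$.

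For the reverse direction, suppose $c_*$ is exact for $\Psi_c$ and $\tau \in \mathcal{T}$ with $p_K(\tau) \ge c_*$. For any $\tau' \succeq_{K^*} \tau$ the monotonicity observation gives $p_K(\tau') \ge p_K(\tau) \ge c_*$, and the lower sandwich becomes $\Phi_{\tau'}(x) \ge \Psi_{c_*}(x)$ on $Q$. The same two-sided bookkeeping as above shows that $\inf_Q \Phi_{\tau'}$ equals the optimal value of $(\mathcal{P})$, and that every minimizer of $\Phi_{\tau'}$ is also a minimizer of $\Psi_{c_*}$, hence a globally optimal solution of $(\mathcal{P})$ by exactness of $c_*$. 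The ``iff'' statement then follows since each direction has produced, from an exact penalty parameter for one of the two families, an exact penalty parameter for the other.

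The only real obstacle I anticipate is the boundary case $c = c_*$ (respectively $p_K(\tau) = c_*$), where the sandwich is not strict and an infeasible point $x$ could in principle satisfy $\langle \tau_*, \varphi(x)\rangle = c_*\|\varphi(x)\|$. A purely optimal-value argument would not exclude such an $x$ from being a minimizer of $\Psi_{c_*}$. The key is to use that $\tau_*$ (resp.~$c_*$) is assumed to be an exact penalty parameter in the sense of Definition~\ref{def:GlobalExactness} rather than merely a parameter matching optimal values; this stronger hypothesis forces every minimizer of $\Phi_{\tau_*}$ (resp.~$\Psi_{c_*}$) to be feasible for $(\mathcal{P})$, which is what closes the argument in the non-strict case.
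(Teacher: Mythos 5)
Your proof is correct and follows essentially the same route as the paper: the two-sided sandwich $\Psi_{p_K(\tau)} \le \Phi_{\tau} \le \Psi_{\|\tau\|_{Y^*}}$ together with the equality $\Phi_{\tau} = \Psi_c = f$ on the feasible set, and the observation that any global minimizer of the larger function is a global minimizer of the smaller one, whose minimizer set is pinned to the solution set of $(\mathcal{P})$ by the exactness hypothesis. Your explicit remarks on the monotonicity $p_K(\tau') \ge p_K(\tau)$ for $\tau' \succeq_{K^*} \tau$ and on the non-strict boundary case are details the paper leaves implicit, but they do not change the argument.
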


\begin{proof}
Observe that
\[
  \Psi_c(x) \le f(x) + p_K(\tau) \| \varphi(x) \| \le \Phi_{\tau}(x) 
  \le f(x) + \| \tau \|_{Y^*} \| \varphi(x) \| \le \Psi_s(x) 
\]
for all $x \in X$, $c \le p_K(\tau)$, and $s \ge \| \tau \|_{Y^*}$. Moreover, $\Psi_c(x) = \Phi_{\tau}(x) = f(x)$
for any feasible point $x$ of the problem $(\mathcal{P})$, and for all $c \ge 0$ and $\tau \in K^*$. Therefore, if the
set of global minimizers of $\Psi_c$ on $Q$ coincides with the set of globally optimal solutions of the problem
$(\mathcal{P})$ for some $c > 0$, then so does the set of global minimizers of $\Phi_{\tau}$ on $Q$ for any
$\tau \in \mathcal{T}$ with $p_K(\tau) \ge c$. Similarly, if the set of global minimizers of $\Phi_\tau$ on $Q$
coincides with the set of globally optimal solutions of the problem $(\mathcal{P})$ for some $\tau \in \mathcal{T}$,
then so does the set of global minimizers of $\Psi_c$ on $Q$ for any $c \ge \| \tau \|_{Y^*}$. Hence taking into
account Def.~\ref{def:GlobalExactness} one obtains the required result.
\end{proof}

\begin{remark} \label{rmrk:PenaltyParameterMultiple}
From the comparison principle it follows that if the penalty function $\Phi_{\tau}$ is globally exact, then for any
$\tau \in \mathcal{T} \cap K^*_{++}$ there exists $c(\tau) > 0$ such that $c(\tau) \tau$ is a global exact penalty
parameter of $\Phi_{\tau}$. Indeed, if $\Phi_{\tau}$ is globally exact, then by the comparison principle the penalty
function $\Psi_c = f(\cdot) + c \| \varphi(\cdot) \|$ is globally exact as well. Let $c_* > 0$ be its global exact
penalty parameter. Then applying the comparison principle once again one obtains that for any 
$\tau \in \mathcal{T} \cap K^*_{++}$ and $c(\tau) \ge c_* / p_K(\tau)$ the vector $c(\tau) \tau$ is a global exact
penalty parameter of $\Phi_{\tau}$. The same statement obviously holds true for local exact penalty parameters.
\end{remark}

In the end of this section we obtain necessary and sufficient conditions for the global exactness of the penalty
function $\Phi_{\tau}$ in the finite dimensional case in the form of the so-called localization principle. Roughly
speaking, this principle states that the global exactness of a penalty function is completely defined by its local
behaviour near globally optimal solutions of the problem under consideration. Various versions of the localization
principle for exact penalty functions with a single penalty parameter and augmented Lagrangian functions were studied in
detail in \cite{Dolgopolik_Unified,Dolgopolik_UnifiedII}. Let $f_*$ be the optimal value of the problem $(\mathcal{P})$.

\begin{theorem}[Localization Principle]
Let $X$ be a finite dimensional normed space, the set $Q$ be a closed, and the functions $f$ and $\| \varphi(\cdot) \|$
be lower semicontinuous (l.s.c.) on $Q$. Suppose also that $\mathcal{T} \subseteq K^*_{++}$. Then the penalty function
$\Phi_{\tau}$ is globally exact if and only if the two following conditions hold true:
\begin{enumerate}
\item{$\Phi_{\tau}$ is locally exact at every globally optimal solution of the problem $(\mathcal{P})$;
}

\item{there exists $\tau_0 \in \mathcal{T}$ such that the sublevel set
\begin{equation} \label{eq:SublevelSet}
  \Big\{ x \in Q \Bigm| \Phi_{\tau_0}(x) < f_* \Big\}
\end{equation}
is either bounded or empty.
}
\end{enumerate}
\end{theorem}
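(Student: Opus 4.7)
The plan is to dispatch necessity quickly from the definition and Lemma~\ref{lemma:ExactnessViaOptValue}, and then to prove sufficiency by reducing to the scalar penalty function $\Psi_c(x) = f(x) + c\|\varphi(x)\|$ via the comparison principle and running a compactness-plus-local-exactness argument there. Necessity is immediate: if $\tau_*$ is a global exact penalty parameter, every globally optimal solution of $(\mathcal{P})$ is globally (hence locally) optimal for $(\mathcal{P}_{\tau_*})$, which gives condition~1; and since the optimal values of $(\mathcal{P})$ and $(\mathcal{P}_{\tau_*})$ coincide by Lemma~\ref{lemma:ExactnessViaOptValue}, the sublevel set $\{x \in Q \mid \Phi_{\tau_*}(x) < f_*\}$ is empty, giving condition~2 with $\tau_0 := \tau_*$.

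For sufficiency I would first transfer both hypotheses into the scalar setting using the hypothesis $\mathcal{T}\subseteq K^*_{++}$ together with the pointwise estimates $\Psi_{p_K(\tau)}\le \Phi_\tau\le \Psi_{\|\tau\|_{Y^*}}$ that already appear in the proof of the comparison principle. If $\Phi_\tau$ is locally exact at a globally optimal $x_*$ with local exact parameter $\tau_{x_*}$, then $\Phi_{\tau_{x_*}}\ge f_*=f(x_*)$ on some $U\cap Q$, so the inequality $\Psi_c\ge\Phi_{\tau_{x_*}}$ for $c\ge\|\tau_{x_*}\|_{Y^*}$ shows that $\Psi_c$ is locally exact at $x_*$ as well. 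Analogously, setting $c_0:=\|\tau_0\|_{Y^*}$, the set $\{x\in Q\mid \Psi_{c_0}(x)<f_*\}$ is contained in $\{x\in Q\mid \Phi_{\tau_0}(x)<f_*\}$ and is therefore bounded or empty. Once the scalar version of the localization principle is established, one application of the comparison principle converts global exactness of $\Psi_c$ back into global exactness of $\Phi_\tau$.

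The scalar version I would prove by contradiction. Assume $\Psi_c$ is not globally exact; then for every $c>0$ the set $L_c:=\{x\in Q\mid \Psi_c(x)<f_*\}$ is nonempty, and $L_c\subseteq L_{c_0}$ for $c\ge c_0$ because $\psi:=\|\varphi\|\ge 0$. Pick $c_n\to+\infty$ with $c_n\ge c_0$ and $x_n\in L_{c_n}$; finite-dimensionality of $X$ and closedness of $Q$ give a subsequence $x_n\to\bar x\in Q$. Since $f$ is l.s.c. on the compact set $\overline{L_{c_0}}\subseteq Q$, it is bounded below there by some $m$, so $c_n\psi(x_n)<f_*-f(x_n)\le f_*-m$; dividing by $c_n\to+\infty$ forces $\psi(x_n)\to 0$, and lower semicontinuity of $\psi$ yields $\psi(\bar x)=0$, i.e., $\bar x\in M\cap Q$. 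Lower semicontinuity of $f$ then gives $f(\bar x)\le\liminf f(x_n)\le f_*$, and feasibility forces $f(\bar x)=f_*$, so $\bar x$ is a globally optimal solution of $(\mathcal{P})$. The transferred local exactness of $\Psi_c$ at $\bar x$ supplies a neighbourhood $U$ of $\bar x$ and $c_{\bar x}>0$ with $\Psi_c\ge f_*$ on $U\cap Q$ for all $c\ge c_{\bar x}$; but for all sufficiently large $n$ one has $x_n\in U$ and $c_n\ge c_{\bar x}$, contradicting $\Psi_{c_n}(x_n)<f_*$.

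The main obstacle I expect is precisely the last step: pinning the cluster point $\bar x$ of the sequence produced from the failing sublevel sets to the set of globally optimal solutions of $(\mathcal{P})$. Boundedness of the sublevel set alone does not prevent the approximating points from escaping feasibility, and one must simultaneously exploit finite-dimensional compactness, lower semicontinuity of $f$ (to get a lower bound on $f$ on the relevant compact set, which in turn forces $c_n\psi(x_n)$ to stay bounded and hence $\psi(x_n)\to 0$), and lower semicontinuity of $\psi$ (to pass the feasibility information to the limit). Once $\bar x$ is identified as globally optimal, the local exactness hypothesis and the monotonicity of $c\mapsto\Psi_c$ close the contradiction cleanly.
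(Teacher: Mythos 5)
Your proof is correct, and its skeleton coincides with the paper's: necessity is read off from the definition and Lemma~\ref{lemma:ExactnessViaOptValue}, and sufficiency is handled by transferring both hypotheses to the scalar penalty function $\Psi_c = f + c\|\varphi(\cdot)\|$ through the two-sided estimate $\Psi_{p_K(\tau)} \le \Phi_\tau \le \Psi_{\|\tau\|_{Y^*}}$ and then invoking the scalar localization principle. The one genuine difference is that the paper treats the scalar localization principle as a black box, citing \cite[Thrm.~3.1]{Dolgopolik_Unified}, whereas you prove it from scratch: assuming non-exactness you extract $x_n \in L_{c_n}$ with $c_n \to +\infty$, use boundedness of $L_{c_0}$ and finite-dimensional compactness to get a cluster point $\bar x \in Q$, force $\psi(x_n) \to 0$ from the lower bound of the l.s.c.\ function $f$ on the compact closure of $L_{c_0}$, identify $\bar x$ as a globally optimal solution via lower semicontinuity of $\psi$ and $f$, and then contradict local exactness at $\bar x$. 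Each step checks out (in particular, non-exactness does give $L_c \ne \emptyset$ for every $c$ by the scalar case of Lemma~\ref{lemma:ExactnessViaOptValue}, and $\overline{L_{c_0}} \subseteq Q$ because $Q$ is closed). What your version buys is a self-contained argument, which also makes visible exactly where finite-dimensionality enters; what the paper's version buys is brevity and a clean separation between the vectorial reduction (the new content) and the known scalar result.
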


\begin{proof}
If $\Phi_{\tau}$ is globally exact with exact penalty parameter $\tau_*$, then it is obviously locally exact at every
globally optimal solution of the problem $(\mathcal{P})$ with the same exact penalty parameter. Furthermore, by
Lemma~\ref{lemma:ExactnessViaOptValue} the sublevel set \eqref{eq:SublevelSet} is empty for $\tau_0 = \tau_*$.

Let us prove the converse statement. Since $\Phi_{\tau}$ is locally exact at every globally optimal solution of 
the problem $(\mathcal{P})$, by the comparison principle the penalty function 
$\Psi_c(\cdot) = f(\cdot) + c \| \varphi(\cdot) \|$ is locally exact at every globally optimal solution of the problem
$(\mathcal{P})$ as well. Moreover, from the inequality
\[
  \Phi_{\tau_0}(x) \le f(x) + \| \tau_0 \|_{Y^*} \| \varphi(x) \| \le \Psi_c(x) 
  \quad \forall x \in X, \: c \ge \| \tau_0 \|_{Y^*}
\]
and the second assumption of the theorem it follows that for any $c \ge \| \tau_0 \|_{Y^*}$ the sublevel set 
$\{ x \in Q \mid \Psi_c(x) < f_* \}$ is either bounded or empty. Therefore, by the localization principle for linear
exact penalty functions \cite[Thrm.~3.1]{Dolgopolik_Unified}, the penalty function $\Psi_c$ is globally exact, which by
the comparison principle implies that the penalty function $\Phi_{\tau}$ is globally exact as well.
\end{proof}

\begin{remark}
Note that in the localization principle we do \textit{not} make any assumptions on local exact penalty parameters of
$\Phi_{\tau}$ at globally optimal solutions of the problem $(\mathcal{P})$. Even if there is an infinite number of such
solutions, no assumptions on the corresponding local exact penalty parameters (such as boundedness) are needed to prove
the localization principle. Let us also note that one can guarantee the boundedness of the sublevel set
\eqref{eq:SublevelSet} by assuming that either the set $Q$ is bounded or the function 
$f(\cdot) + c \| \varphi(\cdot) \|$ is coercive on the set $Q$ for some $c > 0$, i.e. 
$f(x_n) + c \| \varphi(x_n) \| \to + \infty$ as $n \to + \infty$, if $\{ x_n \} \subset Q$ and $\| x_n \| \to + \infty$
as $n \to \infty$.
\end{remark}

\section{An Algorithmic Approach to Global Exactness and Adaptive Penalty Updates}
\label{sect:GlobalExactness}

As one might expect, the localization principle for exact penalty functions does not hold true in the infinite
dimensional case (see \cite[Examples~3--5]{Dolgopolik_ExPenFunc}). In order to prove the global exactness of a penalty
function for infinite dimensional problems, one usually must impose some very restrictive assumptions on constraints,
that are not satisfied in many particular examples 
(cf. \cite{Zaslavski,Zaslavski2013,Demyanov2010,Dolgopolik_ExPenFunc,DolgopolikFominyh,Dolgopolik_Control}). In this
section, we 
present a completely new \textit{algorithmic} approach to an analysis of the global exactness of penalty functions in
the infinite dimensional case. This approach is based on an analysis of behaviour of minimization sequences generated
by global exact penalty methods. It allows one to obtain simple necessary and sufficient conditions for the global
exactness of penalty functions, that do not rely on restrictive assumptions on constraints and are much more suitable
for design and analysis of exact penalty methods than existing conditions.

Let us first prove the following auxiliary result on behaviour of global minimizers of $\Phi_{\tau}$ as the penalty
parameter goes to infinity (cf. analogous results for standard penalty functions, e.g.
\cite[Prp.~3.5]{Dolgopolik_ExPenFunc}).

\begin{lemma} \label{lemma:ExPenMinimizingSeq}
Let $\tau \in \mathcal{T} \cap K^*_{++}$ be given, $\{ c_n \} \subset (0, + \infty)$ be a strictly increasing unbounded
sequence, and $x_n$ be a point of global minimum of the function $\Phi_{\tau_n}$ on the set $Q$ for any 
$n \in \mathbb{N}$, where $\tau_n = c_n \tau$. Then the sequence $\{ f(x_n) \}$ is nondecrasing and $\varphi(x_n) \to 0$
as $n \to \infty$. If, in addition, $Q$ is closed and both $f$ and $\| \varphi(\cdot) \|$ are l.s.c. on $Q$, then all
limit points of the sequence $\{ x_n \}$ (if exist) are globally optimal solutions of the problem $(\mathcal{P})$.
\end{lemma}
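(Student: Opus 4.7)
The plan is to exploit the two defining inequalities of the minimizers $x_n$ (first against a global minimizer $x_*$ of $(\mathcal{P})$, then against each other) together with $\tau \in K^*_{++}$, which provides the crucial bound $p_K(\tau)\|y\| \le \langle \tau, y\rangle$ for $y \in K$. Let $f_*$ be the optimal value and $x_*$ a globally optimal solution of $(\mathcal{P})$, so $\varphi(x_*) = 0$ and $\Phi_{\tau_n}(x_*) = f_*$. The first basic inequality I would write down is
\[
  f(x_n) + c_n \langle \tau, \varphi(x_n)\rangle = \Phi_{\tau_n}(x_n) \le \Phi_{\tau_n}(x_*) = f_*,
\]
which in particular forces $\varphi(x_n) \in K$ (not $\infty$) and $f(x_n) \le f_*$.

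For monotonicity of $\{f(x_n)\}$, I would fix $m > n$ and combine the two inequalities $\Phi_{\tau_m}(x_m) \le \Phi_{\tau_m}(x_n)$ and $\Phi_{\tau_n}(x_n) \le \Phi_{\tau_n}(x_m)$. Adding them collapses all $f$-terms and yields
\[
  (c_m - c_n)\langle \tau, \varphi(x_m)\rangle \le (c_m - c_n)\langle \tau, \varphi(x_n)\rangle,
\]
so $\langle \tau, \varphi(x_m)\rangle \le \langle \tau, \varphi(x_n)\rangle$ since $c_m > c_n$. Feeding this back into $\Phi_{\tau_n}(x_n) \le \Phi_{\tau_n}(x_m)$ gives $f(x_n) \le f(x_m)$. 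This telescoping comparison is the one nontrivial algebraic step; the rest is bookkeeping.

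For $\varphi(x_n) \to 0$, the monotonicity just proved shows $f(x_n) \ge f(x_1)$, so from the first displayed inequality
\[
  c_n \langle \tau, \varphi(x_n)\rangle \le f_* - f(x_n) \le f_* - f(x_1).
\]
Dividing by $c_n \to +\infty$ gives $\langle \tau, \varphi(x_n)\rangle \to 0$, and since $\tau \in K^*_{++}$ and $\varphi(x_n) \in K$, the inequality $p_K(\tau)\|\varphi(x_n)\| \le \langle \tau, \varphi(x_n)\rangle$ with $p_K(\tau) > 0$ forces $\|\varphi(x_n)\| \to 0$.

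Finally, for the statement about limit points, I assume $Q$ closed and $f$, $\|\varphi(\cdot)\|$ lower semicontinuous on $Q$. If a subsequence $x_{n_k} \to \overline{x}$, then $\overline{x} \in Q$ by closedness, and l.s.c.\ of $\|\varphi(\cdot)\|$ together with $\|\varphi(x_{n_k})\| \to 0$ gives $\|\varphi(\overline{x})\| = 0$, hence $\overline{x} \in M$; so $\overline{x}$ is feasible for $(\mathcal{P})$ and $f(\overline{x}) \ge f_*$. Conversely, l.s.c.\ of $f$ combined with $f(x_{n_k}) \le f_*$ gives $f(\overline{x}) \le \liminf f(x_{n_k}) \le f_*$, so $f(\overline{x}) = f_*$ and $\overline{x}$ is globally optimal. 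I do not expect any real obstacle beyond the combined-inequality trick in the monotonicity step; the remaining arguments are standard applications of lower semicontinuity and the defining property of $K^*_{++}$.
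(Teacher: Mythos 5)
Your proof is correct and follows essentially the same route as the paper: the same trick of adding the two cross-minimality inequalities to isolate $(c_m-c_n)\langle\tau,\varphi(x_m)-\varphi(x_n)\rangle$, the same use of $p_K(\tau)>0$ to pass from $\langle\tau,\varphi(x_n)\rangle\to 0$ to $\|\varphi(x_n)\|\to 0$, and the same lower-semicontinuity argument for limit points. The only (harmless) difference is that you derive $\varphi(x_n)\to 0$ directly from the bound $c_n\langle\tau,\varphi(x_n)\rangle\le f_*-f(x_1)$ rather than by the paper's reductio ad absurdum, and you make the finiteness of $\varphi(x_n)$ and $f(x_n)$ explicit at the outset.
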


\begin{proof}
Let us first show that the sequence $\{ f(x_n) \}$ is nondecreasing. Indeed, fix any $n \in \mathbb{N}$. Then by
definition
\begin{align*}
  f(x_{n + 1}) + c_{n + 1} \langle \tau, \varphi(x_{n + 1}) \rangle &= \Phi_{\tau_{n + 1}}(x_{n + 1})
  \\
  &\le \Phi_{\tau_{n + 1}}(x_n) = f(x_n) + c_{n + 1} \langle \tau, \varphi(x_n) \rangle,
\end{align*}
which yield
\begin{equation} \label{eq:ObjFuncDecay}
  f(x_{n + 1}) - f(x_n) \le c_{n + 1} \langle \tau, \varphi(x_n) - \varphi(x_{n + 1}) \rangle.
\end{equation}
Similarly, by definition one has
\[
  f(x_n) + c_n \langle \tau, \varphi(x_n) \rangle = \Phi_{\tau_n}(x_n)
  \le \Phi_{\tau_n}(x_{n + 1}) = f(x_{n + 1}) + c_n \langle \tau, \varphi(x_{n + 1}) \rangle,
\]
which implies that
\begin{equation} \label{eq:ObjFuncDecay2}
  f(x_n) - f(x_{n + 1}) \le c_n \langle \tau, \varphi(x_{n + 1}) - \varphi(x_n) \rangle.
\end{equation}
Adding \eqref{eq:ObjFuncDecay} and \eqref{eq:ObjFuncDecay2} one gets that 
$(c_{n + 1} - c_n) \langle \tau, \varphi(x_n) - \varphi(x_{n + 1}) \rangle \ge 0$.
Therefore $\langle \tau, \varphi(x_n) - \varphi(x_{n + 1}) \rangle \ge 0$ for all $n \in \mathbb{N}$, since the sequence
$\{ c_n \}$ is strictly
increasing. Consequently, $f(x_n) \le f(x_{n + 1})$ due to \eqref{eq:ObjFuncDecay2}.

Arguing by reductio ad absurdum, suppose that the sequence $\{ \varphi(x_n) \}$ does not converge to zero. Then there
exist $\eta > 0$ and a subsequence $\{ x_{n_k} \}$ such that 
$\langle \tau, \varphi(x_{n_k}) \rangle \ge p_K(\tau) \| \varphi(x_{n_k}) \| \ge \eta$ for all $k \in \mathbb{N}$.
Hence taking into account the fact that the sequence $\{ f(x_n) \}$ is nondecreasing one obtains that
\[
  \Phi_{\tau_{n_k}}(x_{n_k}) = f(x_{n_k}) + \langle \tau_{n_k}, \varphi(x_{n_k}) \rangle
  \ge f(x_1) + c_{n_k} \eta \quad \forall k \in \mathbb{N}.
\]
Therefore $\Phi_{\tau_{n_k}}(x_{n_k}) \to + \infty$ as $k \to \infty$. On the other hand, for any feasible point $x$ of
the problem $(\mathcal{P})$ such that $f(x) < + \infty$ one has 
\[
  \Phi_{\tau_n}(x_n) \le \Phi_{\tau_n}(x) = f(x) < + \infty	\quad \forall n \in \mathbb{N},
\]
which contradicts the fact that $\Phi_{\tau_{n_k}}(x_{n_k}) \to + \infty$ as $k \to \infty$. Thus, $\varphi(x_n) \to 0$
as $n \to \infty$.

If the function $\| \varphi(\cdot) \|$ is l.s.c. on $Q$, and $x_*$ is a limit point of the sequence $\{ x_n \}$, then
obviously $\| \varphi(x_*) \| = 0$, i.e. $x_*$ is a feasible point of the problem $(\mathcal{P})$. Note that for any
globally optimal solution $z_*$ of the problem $(\mathcal{P})$ one has
\[
  f(z_*) = \Phi_{\tau_n}(z_*) \ge \Phi_{\tau_n}(x_n) \ge f(x_n) \quad \forall n \in \mathbb{N},
\]
that is, $f(x_n) \le f(z_*)$ for all $n \in \mathbb{N}$. Consequently, $f(x_*) \le f(z_*)$ (provided $f$ is l.s.c. on
$Q$), and $x_*$ is a globally optimal solution of the problem $(\mathcal{P})$.
\end{proof}

Consider the simplest (`naive') exact penalty method utilising the penalty function $\Phi_{\tau}$
(see~Algorithmic Pattern~\ref{alg:SimplestExPen}). Our first aim is to prove a natural convergence theorem for this
method, which
will serve as a foundation for our algorithmic approach to global exactness. 

\begin{algorithm}[ht!]	\label{alg:SimplestExPen}
\caption{The Simplest Global Exact Penalty Method}

\noindent\textbf{Initial data.} {Choose $\tau_1 \in \mathcal{T}$ and $\theta > 1$, and set $n := 1$.}

\noindent\textbf{Main Step.} {Set the value of $x_n$ to a globally optimal solution of the penalized problem
\[
  \text{minimize} \enspace \Phi_{\tau_n}(x) \quad \text{subject to} \quad x \in Q.
\]
If $n \ge 2$ and $\Phi_{\tau_n}(x_n) = \Phi_{\tau_{n - 1}}(x_{n - 1})$, \textbf{Stop}. Otherwise, put 
$\tau_{n + 1} = \theta \tau_n$, $n := n + 1$, and repeat the \textbf{Main Step}.
}
\end{algorithm}

Observe that if the penalty function $\Phi_{\tau}$ is globally exact, then it necessarily is locally exact at every
globally optimal solution of the problem $(\mathcal{P})$. Therefore, it is natural to analyse a behaviour of sequences
generated by Algorithmic Pattern~\ref{alg:SimplestExPen} under the assumptions that $\Phi_{\tau}$ is locally exact at
every
globally optimal solution of the problem $(\mathcal{P})$.

\begin{theorem} \label{thrm:SimplestExPenConvergence}
Let the set $Q$ be closed, the functions $f$ and $\| \varphi(\cdot) \|$ be l.s.c. on $Q$, and 
$\tau_1 \in \mathcal{T} \cap K^*_{++}$. Suppose also that the penalty function $\Phi_{\tau}$ is locally exact at every
globally optimal solution $x_*$ of the problem $(\mathcal{P})$. Then Algorithmic Pattern~\ref{alg:SimplestExPen} either
terminates
after a finite number of iterations by finding a globally optimal solution of the problem $(\mathcal{P})$ or generates
an infinite sequence $\{ x_n \}$ that has no limit points.
\end{theorem}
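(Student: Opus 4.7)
The plan is to split the analysis into the two scenarios the statement allows, handling the termination case directly from the stopping criterion and ruling out limit points in the non-termination case via the assumed local exactness together with Lemma~\ref{lemma:ExPenMinimizingSeq}.

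\textbf{Termination case.} Suppose the algorithm stops at some iteration $n\ge 2$ with $\Phi_{\tau_n}(x_n)=\Phi_{\tau_{n-1}}(x_{n-1})$. Writing $\tau_n=c_n\tau_1$ with $c_n=\theta^{n-1}$, I would decompose $\Phi_{\tau_n}(x_n)=\Phi_{\tau_{n-1}}(x_n)+(c_n-c_{n-1})\langle\tau_1,\varphi(x_n)\rangle$ and combine this with the fact that $x_{n-1}$ minimizes $\Phi_{\tau_{n-1}}$ on $Q$, i.e. $\Phi_{\tau_{n-1}}(x_{n-1})\le\Phi_{\tau_{n-1}}(x_n)$. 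This yields $(c_n-c_{n-1})\langle\tau_1,\varphi(x_n)\rangle\le 0$; since $c_n>c_{n-1}$ and $\tau_1\in\mathcal{T}\cap K^*_{++}$, this forces $\varphi(x_n)=0$, so $x_n\in M\cap Q$ is feasible. Then for any globally optimal $z_*$, $f(z_*)=\Phi_{\tau_n}(z_*)\ge\Phi_{\tau_n}(x_n)=f(x_n)$, proving $x_n$ is globally optimal for $(\mathcal{P})$.

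\textbf{Non-termination case.} Assume for contradiction that $\{x_n\}$ is infinite and admits a limit point $x_*$. Lemma~\ref{lemma:ExPenMinimizingSeq} (applicable because $\tau_n=c_n\tau_1$ with $\{c_n\}$ strictly increasing and unbounded) shows $x_*$ is a globally optimal solution of $(\mathcal{P})$. Here I would invoke the local-exactness analogue of Remark~\ref{rmrk:PenaltyParameterMultiple}: since $\Phi_\tau$ is locally exact at $x_*$ and $\tau_1\in K^*_{++}$, there exists $c_*>0$ such that $c_*\tau_1$ is a local exact penalty parameter at $x_*$, i.e. a neighbourhood $U$ of $x_*$ exists with $\Phi_{c_*\tau_1}(x)\ge f(x_*)$ for all $x\in U\cap Q$. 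For all $n$ large enough that $c_n\ge c_*$ one has $\tau_n\succeq_{K^*}c_*\tau_1$, hence $\Phi_{\tau_n}(x)\ge f(x_*)$ on $U\cap Q$ as well.

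\textbf{Deriving the contradiction.} Pick a subsequence $x_{n_k}\to x_*$; for large $k$, $x_{n_k}\in U\cap Q$ and $c_{n_k}\ge c_*$, so $\Phi_{\tau_{n_k}}(x_{n_k})\ge f(x_*)$. On the other hand, $x_*$ is feasible, hence $\Phi_{\tau_n}(x_n)\le\Phi_{\tau_n}(x_*)=f(x_*)$ for every $n$. Thus $\Phi_{\tau_{n_k}}(x_{n_k})=f(x_*)$ for all sufficiently large $k$. The sequence $\{\Phi_{\tau_n}(x_n)\}$ is nondecreasing, since
\[
  \Phi_{\tau_{n+1}}(x_{n+1})\ge\Phi_{\tau_n}(x_{n+1})\ge\Phi_{\tau_n}(x_n),
\]
the first inequality using $c_{n+1}>c_n$ and $\langle\tau_1,\varphi(x_{n+1})\rangle\ge 0$, the second the minimizing property of $x_n$. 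Hence $\Phi_{\tau_n}(x_n)=f(x_*)$ for all $n$ past some index, and in particular $\Phi_{\tau_n}(x_n)=\Phi_{\tau_{n-1}}(x_{n-1})$ for such $n$, which would have triggered the stopping rule, contradicting the assumption that the algorithm never terminates.

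The one delicate point is the second case: one must realise that the local exactness hypothesis only guarantees \emph{some} local exact penalty parameter $\tau_*\in\mathcal{T}$ at $x_*$, with no a priori relation to $\tau_1$; bridging the gap requires the remark that (because $\Phi_\tau$ is locally exact and $\tau_1\in K^*_{++}$) some positive multiple of $\tau_1$ is itself a local exact penalty parameter, so that the scalar sequence $c_n$ eventually dominates it and the neighbourhood $U$ becomes usable uniformly in $n$.
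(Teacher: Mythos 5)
Your proposal is correct and follows essentially the same route as the paper's proof: the termination case reduces to showing $\varphi(x_n)=0$ via the strict increase of the penalty parameter (the paper phrases this as "any infeasible point has strictly larger penalized value", which is your inequality rearranged), and the non-termination case uses Lemma~\ref{lemma:ExPenMinimizingSeq} to identify the limit point as a global solution, Remark~\ref{rmrk:PenaltyParameterMultiple} to obtain a multiple of $\tau_1$ as a local exact penalty parameter, and the monotonicity of $\Phi_{\tau_n}(x_n)$ together with the upper bound $f_*$ to force the stopping criterion. The "delicate point" you flag at the end is precisely the step the paper handles via Remark~\ref{rmrk:PenaltyParameterMultiple}, so nothing is missing.
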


\begin{proof}
Let us first note that if Algorithmic Pattern~\ref{alg:SimplestExPen} terminates after a finite number of iterations,
then 
the last computed point $x_n$ is a globally optimal solution of the problem $(\mathcal{P})$. Indeed, suppose that 
$\Phi_{\tau_n}(x_n) = \Phi_{\tau_{n - 1}}(x_{n - 1})$ for some $n \in \mathbb{N}$. Recall that 
$\tau_n = \theta \tau_{n - 1}$ and $\theta > 1$. Therefore for any point $x$ that is infeasible for the problem
$(\mathcal{P})$ (i.e. $\varphi(x) \in K \setminus \{ 0 \}$) one has
\begin{align*}
  \Phi_{\tau_n}(x) = f(x) + \theta \langle \tau_{n - 1}, \varphi(x) \rangle 
  &> f(x) + \langle \tau_{n - 1}, \varphi(x) \rangle 
  \\
  &= \Phi_{\tau_{n - 1}}(x) \ge \Phi_{\tau_{n - 1}}(x_{n - 1}),
\end{align*}
which implies that the point $x_n$ is feasible for the problem $(\mathcal{P})$. Hence taking into account the
fact that $\Phi_{\tau}(x) = f(x)$ for any feasible point $x$ and any $\tau \in K^*$, one obtains that $x_n$ is a
globally optimal solution of the problem $(\mathcal{P})$.

Now we turn to the proof of the main statement of the theorem. Arguing by reductio ad absurdum, suppose that
Algorithmic Pattern~\ref{alg:SimplestExPen} does not terminate after a finite number of iterations and generates a
sequence 
$\{ x_n \}$ that has a limit point $x_*$. Then there exists a subsequence $\{ x_{n_k} \}$ converging to $x_*$. By
Lemma~\ref{lemma:ExPenMinimizingSeq} the point $x_*$ is a globally optimal solution of the problem $(\mathcal{P})$.
By our assumption the penalty function $\Phi_{\tau}$ is locally exact at $x_*$. Therefore by
Remark~\ref{rmrk:PenaltyParameterMultiple} there exists $c(x_*) > 0$ such that the vector $c(x_*) \tau_1$ is a local
exact penalty parameter at $x_*$, which implies that there exists a neighbourhood $U$ of $x_*$ such that
\[
  \Phi_{c \tau_1}(x) \ge \Phi_{c \tau_1}(x_*) = f(x_*) =: f_* \quad \forall x \in U \cap Q, \: c \ge c(x_*).
\]
By definitions $\tau_n = \theta^n \tau_1$ and $\{ x_{n_k} \}$ converges to $x_*$. Therefore there exists 
$k \in \mathbb{N}$ such that $x_{n_k} \in U$ and $\theta^{n_k} \ge c(x_*)$. For any such $k$ one has
\[
  \Phi_{\tau_{n_k}}(x_{n_k}) \ge \Phi_{\tau_{n_k}}(x_*) = f_*,
\]
that is, $\Phi_{\tau_{n_k}}(x_{n_k}) = f_*$. Hence bearing in mind the facts that for all $x \in X$ one has
$\Phi_{\tau_{n + 1}}(x) \ge \Phi_{\tau_n}(x)$, and $\Phi_{\tau_n}(x_*) = f_*$ for all $n \in \mathbb{N}$, one obtains
that
\[
  f_* \ge \Phi_{\tau_{n_k + 1}}(x_{n_k + 1}) \ge \Phi_{\tau_{n_k}}(x_{n_k}) = f_* \quad \forall n \ge n_k,
\]
which contradicts our assumption that Algorithmic Pattern~\ref{alg:SimplestExPen} does not terminate after a finite
number of iterations.
\end{proof}

As a straightforward corollary to the previous theorem, we can obtain simple necessary and sufficient conditions for
the global exactness of the penalty function $\Phi_{\tau}$.

\begin{theorem} \label{thrm:AlgorithmicGlobalExact}
Let $\mathcal{T} \subseteq K^*_{++}$. Then the penalty function $\Phi_{\tau}$ is globally exact if and only if 
the two following conditions hold true:
\begin{enumerate}
\item{$\Phi_{\tau}$ is locally exact at every globally optimal solution of the problem $(\mathcal{P})$;
}

\item{Algorithmic Pattern~\ref{alg:SimplestExPen} with arbitrary $\tau_1 \in \mathcal{T}$ terminates after a finite
number of
iterations.
}
\end{enumerate}
\end{theorem}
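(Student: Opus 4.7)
The plan is to deduce this as a straightforward corollary of Theorem~\ref{thrm:SimplestExPenConvergence}, combined with Remark~\ref{rmrk:PenaltyParameterMultiple} (to bridge between an abstract exact penalty parameter and the particular scaled iterates $\tau_n = \theta^{n-1}\tau_1$) and Lemma~\ref{lemma:ExactnessViaOptValue} (to extract global exactness from equality of optimal values of $(\mathcal{P})$ and $(\mathcal{P}_{\tau_n})$). I would split the proof into necessity and sufficiency.

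For necessity, suppose $\Phi_\tau$ is globally exact with exact penalty parameter $\tau_*$. Condition (1) is immediate from Definition~\ref{def:GlobalExactness}: any globally optimal solution $x_*$ of $(\mathcal{P})$ remains globally optimal for $(\mathcal{P}_\tau)$ whenever $\tau \succeq_{K^*} \tau_*$, and is in particular a local minimum of $\Phi_\tau$ on $Q$, so $\tau_*$ serves as a local exact penalty parameter at $x_*$. For (2), I would apply Remark~\ref{rmrk:PenaltyParameterMultiple} to $\tau_1 \in \mathcal{T} \cap K^*_{++}$ to obtain $c(\tau_1) > 0$ such that $c(\tau_1)\tau_1$ is itself a global exact penalty parameter. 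Choosing $N$ large enough that $\theta^{n-1} \ge c(\tau_1)$ for all $n \ge N$, the identity $\tau_n - c(\tau_1)\tau_1 = (\theta^{n-1} - c(\tau_1))\tau_1 \in K^*$ (true because $\tau_1 \in K^*$ and the coefficient is nonnegative) shows that $\tau_n$ is itself a global exact penalty parameter for every $n \ge N$. Consequently $x_N$ and $x_{N+1}$ are both globally optimal for $(\mathcal{P})$, giving $\Phi_{\tau_{N+1}}(x_{N+1}) = f_* = \Phi_{\tau_N}(x_N)$, so the stopping test fires no later than iteration $N+1$.

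For sufficiency, assume (1) and (2) and let $n$ denote the termination index. The opening calculation in the proof of Theorem~\ref{thrm:SimplestExPenConvergence}, which relies only on $\theta > 1$ together with $\tau_{n-1} \in K^*_+$ (automatic under the hypothesis $\mathcal{T} \subseteq K^*_{++}$) and needs no semicontinuity, already shows that $x_n$ is a globally optimal solution of $(\mathcal{P})$. Hence $\varphi(x_n) = 0$ and $\Phi_{\tau_n}(x_n) = f(x_n) = f_*$. Since $x_n$ globally minimizes $\Phi_{\tau_n}$ on $Q$, this yields $\inf_{x \in Q} \Phi_{\tau_n}(x) = f_*$, i.e.\ the optimal values of $(\mathcal{P})$ and $(\mathcal{P}_{\tau_n})$ coincide, and Lemma~\ref{lemma:ExactnessViaOptValue} delivers global exactness.

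The main subtlety lies in the necessity of (2): one must ensure the particular ray $\{\theta^{n-1}\tau_1 : n \in \mathbb{N}\}$ eventually lies in the region of exact penalty parameters. The difficulty is that Definition~\ref{def:GlobalExactness} only guarantees the existence of \emph{some} $\tau_* \in \mathcal{T}$, which need not be $K^*$-comparable to any scalar multiple of the chosen $\tau_1$. Remark~\ref{rmrk:PenaltyParameterMultiple} bypasses this obstacle by producing an exact penalty parameter collinear with $\tau_1$, after which comparison along that ray is trivial. I also note that condition (1) is not used in the sufficiency argument; its presence in the statement is justified purely by its necessity and makes the characterisation parallel to the localization principle of the previous section.
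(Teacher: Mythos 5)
Your proof is correct and follows essentially the same route as the paper: necessity of (2) via Remark~\ref{rmrk:PenaltyParameterMultiple} to produce an exact penalty parameter collinear with $\tau_1$, and sufficiency via the termination analysis of Algorithmic Pattern~\ref{alg:SimplestExPen}. Your version is in fact slightly more careful, since you observe that only the first (semicontinuity-free) part of the proof of Theorem~\ref{thrm:SimplestExPenConvergence} is needed for sufficiency and you make the final step explicit through Lemma~\ref{lemma:ExactnessViaOptValue}, whereas the paper invokes the full theorem and writes ``obviously implies.''
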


\begin{proof}
Let $\Phi_{\tau}$ be globally exact with exact penalty parameter $\tau_*$. Then, obviously, $\Phi_{\tau}$ is locally
exact at every globally optimal solution of the problem $(\mathcal{P})$ with the same exact penalty parameter.

By Remark~\ref{rmrk:PenaltyParameterMultiple} there exists $c_0 > 0$ such that $c_0 \tau_1$ is a global exact penalty
parameter of $\Phi_{\tau}$. Clearly, $\theta^n \ge c_0$ for some $n \in \mathbb{N}$, which implies that
$\tau_n$ is a global exact penalty parameter of $\Phi_{\tau}$. Consequently, Algorithmic Pattern~\ref{alg:SimplestExPen}
terminates after at most $n + 1$ iterations, since by the definition of global exactness the points $x_n$ and
$x_{n + 1}$ are globally optimal solutions of the problem $(\mathcal{P})$ and
$\Phi_{\tau_n}(x_n) = \Phi_{\tau_{n + 1}}(x_{n + 1}) = f_*$.

Let us prove the converse statement. If Algorithmic Pattern~\ref{alg:SimplestExPen} with 
$\tau_1 \in \mathcal{T} \subseteq K^*_{++}$ terminates after a finite number of iterations, then by
Theorem~\ref{thrm:SimplestExPenConvergence} the last computed point $x_{n + 1}$ is a global minimizer of the
problem $(\mathcal{P})$, which obviously implies that the penalty function $\Phi_{\tau}$ is globally exact.
\end{proof}

\begin{remark}
The previous theorem can be restated as follows. Let $\Phi_{\tau}$ be locally exact at every globally optimal solution
of the problem $(\mathcal{P})$. Then $\Phi_{\tau}$ is \textit{not} globally exact iff a sequence generated by
Algorithmic Pattern~\ref{alg:SimplestExPen} has \text{no} limit points. Thus, the global exactness of the penalty
function
$\Phi_{\tau}$ is completely predefined by its behaviour near globally optimal solutions of the problem $(\mathcal{P})$
and behaviour of sequences generated by global exact penalty methods. Moreover, it seems more natural to study global
exactness
of penalty functions in the context of exact penalty methods than on its own, since this way one can avoid restrictive
assumptions on constraints. In particular, in Theorem~\ref{thrm:AlgorithmicGlobalExact} we do not impose any nonlocal
assumptions on constraints, such as the Palais-Smale conditions from \cite{Zaslavski,Zaslavski2013} or an assumption
from
\cite{DolgopolikFominyh,Dolgopolik_Control} that ensures uniform nonlocal metric regularity of constraints.
\end{remark}

The penalty updating strategy from Algorithmic Pattern~\ref{alg:SimplestExPen} ($\tau_{n + 1} = \theta \tau_n$ for some
fixed
$\theta > 1$) largely negates the benefits of using vectorial penalty parameter. Instead of adjusting the penalty
parameter adaptively, i.e. in a way that takes into account which constraints have greater violation measure, we simply
increase the penalty parameter by a constant factor. To overcome this issue, let us present and analyse a modified
version of Algorithmic Pattern~\ref{alg:SimplestExPen} with adaptive penalty updates, largely inspired by the penalty
updates
from paper \cite{BurachikKayaPrice}, in which a primal-dual approach to penalty updating was considered.

Suppose that there is an embedding $i \colon K \to K^*$ and $\mathcal{T} + i(K) \subseteq \mathcal{T}$, i.e. the set
$\mathcal{T}$ is closed under addition with vectors from the set $i(K)$. A theoretical scheme of exact penalty method
with adaptive penalty updates is given in Algorithmic Pattern~\ref{alg:ExPenAdaptive}. Let us note that the simplest
choice of 
the scaling parameters $s_n$ is $s_n \equiv 1$. One can also set $s_n = \gamma / \| \varphi(x_n) \|$ for some 
$\gamma > 0$ to avoid an excessive increase of the norm of the penalty parameter $\tau_n$, when the infeasibility
measure 
$\| \varphi(x_n) \|$ is sufficiently large.

\begin{algorithm}[ht!]	\label{alg:ExPenAdaptive}
\caption{Global Exact Penalty Method with Adaptive Penalty Updates}

\noindent\textbf{Initial data.} {Choose $\tau_1 \in \mathcal{T}$, and set $n := 1$.}

\noindent\textbf{Main Step.} {Set the value of $x_n$ to a globally optimal solution of the penalized problem
\[
  \text{minimize} \enspace \Phi_{\tau_n}(x) \quad \text{subject to} \quad x \in Q.
\]
If $x_n$ is feasible for the problem $(\mathcal{P})$, \textbf{Stop}. Otherwise, choose a scaling coefficient $s_n > 0$,
put $\tau_{n + 1} = \tau_n + s_n i(\varphi(x_n))$ and $n := n + 1$, and repeat the 
\textbf{Main Step}.
}
\end{algorithm}

Observe that for the penalty function from Example~\ref{example:MathProgram} the penalty updates from
Algorithmic Pattern~\ref{alg:ExPenAdaptive} take the form
\begin{multline*}
  \tau_{n + 1} = \tau_n \\
  + s_n 
  \Big( \max\{ 0, g_1(x_n) \}, \ldots, \max\{ 0, g_{m_1}(x_n) \}, |g_{m_1 + 1}(x_n)|, \ldots, |g_{m_2}(x_n)| \Big)^T,
\end{multline*}
and, in essence, coincide with the penalty updates from \cite{BurachikKayaPrice}. In this case the increase of each
coordinate of the penalty parameter is proportional to the violation of the corresponding constraint. Those components
of the penalty parameter that correspond to constraints with greater violation are increased more, while if a certain
constraint is ``almost satisfied'', then then the corresponding penalty parameter is changed only slightly. Note also
that in the case of the exact penalty function for nonlinear semidefinite programming problems from
Example~\ref{example:ConeConstrained}, the penalty updates take the form $\tau_{n + 1} = \tau_n + s_n [G(x_n)]_+$.
Finally, for the penalty function from Example~\ref{example:OptimalControl}  the penalty updates take the form $\tau_{n
+ 1}(t) = \tau_n(t) + s_n \max\{ 0, g(x_n(t), t) \}$, $t \in [0, T]$. In this case, the penalty parameter is increased
more at those points $t \in [0, T]$ for which the violation of the state constraint is greater.

\begin{remark}
Let us note that the penalty updating rule $\tau_{n + 1} = \tau_n + s_n i(\varphi(x_n))$ no longer allows one to use
the equality $\Phi_{\tau_n}(x_n) = \Phi_{\tau_{n - 1}}(x_{n - 1})$ as a stopping criterion, since the validity of this
equality no longer implies that the point $x_n$ is feasible for the problem $(\mathcal{P})$ (unless 
$i(\varphi(x_n)) \in K^*_+$). Therefore, we chose the feasibility of the point $x_n$ as a termination criterion. 
A more practical stopping rule would be $\| \varphi(x_n) \| < \varepsilon$ for some small $\varepsilon > 0$.
\end{remark}

Let us present a convergence theorem for Algorithmic Pattern~\ref{alg:ExPenAdaptive} in the case when $\Phi_{\tau}$ is a
penalty
function from Example~\ref{example:MathProgram}. A convergence analysis of Algorithmic Pattern~\ref{alg:ExPenAdaptive}
in the
general case remains a challenging open problem.

\begin{theorem} \label{thrm:AdaptiveExPenConvergence}
Let $Y = \mathbb{R}^m$, $K = \mathbb{R}^m_+$ be the nonnegative orthant, and $\mathcal{T} \subset \mathbb{R}^m_+$ be 
the set of vectors with positive coordinates. Let also the set $Q$ be closed, the functions $f$ and 
$\| \varphi(\cdot) \|$ be l.s.c. on $Q$, and there exist $\gamma > 0$ such that either $s_n \ge \gamma > 0$ for all $n
\in \mathbb{N}$ or $s_n \ge \gamma / \| \varphi(x_n) \|$ for all $n \in \mathbb{N}$. Then 
Algorithmic Pattern~\ref{alg:ExPenAdaptive} either terminates after a finite number of iterations by finding a globally
optimal
solution of the problem $(\mathcal{P})$ or generates an infinite sequence $\{ x_n \}$ such that $\varphi(x_n) \to 0$
as $n \to \infty$, and all limit points of the sequence $\{ x_n \}$ (if exist) are globally optimal solutions of the
problem $(\mathcal{P})$. Moreover, if the sequence of penalty parameters $\{ \tau_n \}$ is bounded and there exists a
limit point of the sequence $\{ x_n \}$, then the penalty function $\Phi_{\tau}$ is globally exact, and the sequence 
$\{ \tau_n \}$ converges to a point $\tau_*$ such that any $\tau \succeq_{K^*_+} \tau_*$ is a global exact penalty
parameter.
\end{theorem}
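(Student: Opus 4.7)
The plan is to address the three assertions of the theorem in turn, working in coordinates with $v_n := \varphi(x_n) \in \mathbb{R}^m_+$ and exploiting the fact that $\tau_n$ is componentwise nondecreasing in $n$, since every update adds $s_n v_n \in \mathbb{R}^m_+$. The finite-termination case is routine: if the algorithm halts at $x_n$, then $v_n = 0$ so $x_n \in M \cap Q$, and for any feasible $x$ the inequality $f(x) = \Phi_{\tau_n}(x) \geq \Phi_{\tau_n}(x_n) = f(x_n)$ certifies global optimality.

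For the infinite case I would first establish two structural facts. Using optimality of $x_n$ for $\tau_n$ together with the identity $\Phi_{\tau_{n+1}}(x) = \Phi_{\tau_n}(x) + s_n v_n^T \varphi(x)$, one obtains $\Phi_{\tau_{n+1}}(x_{n+1}) \geq \Phi_{\tau_n}(x_{n+1}) \geq \Phi_{\tau_n}(x_n)$, so $\{\Phi_{\tau_n}(x_n)\}$ is nondecreasing. Comparing with any globally optimal $x_{\dagger}$ of $(\mathcal{P})$ gives $\Phi_{\tau_n}(x_n) \leq \Phi_{\tau_n}(x_{\dagger}) = f_*$, so this sequence converges to some $L \leq f_*$. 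Subtracting $\Phi_{\tau_1}(x_n) \geq \Phi_{\tau_1}(x_1)$ from $\Phi_{\tau_n}(x_n) \leq f_*$ and using $\Phi_{\tau_n} - \Phi_{\tau_1} = (\tau_n - \tau_1)^T \varphi$ yields the uniform bound
\[
(\tau_n - \tau_1)^T v_n \leq A := f_* - \Phi_{\tau_1}(x_1) \qquad \forall n \in \mathbb{N},
\]
which I see as the linchpin of the proof that $v_n \to 0$.

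The main obstacle is precisely this claim $v_n \to 0$. I would argue by contradiction: assume $\|v_{n_k}\| \geq \eta > 0$ along a subsequence and, for each $k$, pick a coordinate $i_k$ at which $v_{n_k}$ attains its largest entry, so $(v_{n_k})_{i_k} \geq \eta/\sqrt{m}$ and $(v_{n_k})_{i_k}/\|v_{n_k}\| \geq 1/\sqrt{m}$. A pigeonhole on $\{1, \ldots, m\}$ fixes one coordinate $i$ that occurs as $i_k$ infinitely often; pass to this subsubsequence. In the case $s_n \geq \gamma$ the update gives $(\tau_{n_k+1})_i - (\tau_{n_k})_i \geq \gamma\eta/\sqrt{m}$, and in the case $s_n \geq \gamma/\|v_n\|$ it gives $(\tau_{n_k+1})_i - (\tau_{n_k})_i \geq \gamma/\sqrt{m}$. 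Componentwise monotonicity of $(\tau_n)_i$ in $n$ then forces $(\tau_{n_k})_i \to \infty$, whence
\[
A \geq (\tau_{n_k} - \tau_1)^T v_{n_k} \geq \bigl((\tau_{n_k})_i - (\tau_1)_i\bigr) \frac{\eta}{\sqrt{m}} \to \infty,
\]
a contradiction. Hence $v_n \to 0$, and lower semicontinuity of $\|\varphi(\cdot)\|$ forces $\varphi(x_*) = 0$ at every limit point $x_*$; combined with $f(x_*) \leq \liminf_k f(x_{n_k}) \leq f_*$ and the reverse inequality from feasibility of $x_*$, this gives $f(x_*) = f_*$.

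For the final assertion, componentwise monotonicity together with boundedness of $\{\tau_n\}$ yields convergence of the full sequence to some $\tau_* \in K^*_+$ in $\mathbb{R}^m$ (each coordinate is monotone and bounded, and $\tau_* \geq \tau_1$ has strictly positive entries). If a limit point $x_*$ of $\{x_n\}$ exists, the preceding step makes it globally optimal with $\varphi(x_*) = 0$, and the estimate $\tau_{n_k}^T v_{n_k} \leq \|\tau_{n_k}\| \|v_{n_k}\| \to 0$ together with $f(x_{n_k}) \to f_*$ produces $L = f_*$. Passing to the limit in $\Phi_{\tau_n}(x_n) \leq \Phi_{\tau_n}(x) = f(x) + \tau_n^T \varphi(x)$ for arbitrary $x \in Q$ gives $\Phi_{\tau_*}(x) \geq f_*$, while $\Phi_{\tau_*}(x_*) = f_*$, so the optimal values of $(\mathcal{P}_{\tau_*})$ and $(\mathcal{P})$ coincide. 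Lemma~\ref{lemma:ExactnessViaOptValue} then delivers global exactness of $\Phi_\tau$, with any $\tau \succeq_{K^*_+} \tau_*$ serving as a global exact penalty parameter.
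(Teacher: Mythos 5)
Your proposal is correct and follows essentially the same strategy as the paper's proof: the heart of both arguments is the contradiction for $\varphi(x_n)\to 0$, obtained by pigeonholing a coordinate whose violation stays bounded away from zero, showing that the corresponding component of $\tau_n$ grows without bound, and contradicting the uniform upper bound $\Phi_{\tau_n}(x_n)\le f_*$ (your ``linchpin'' inequality $(\tau_n-\tau_1)^T v_n\le f_*-\Phi_{\tau_1}(x_1)$ is just this bound rearranged). The only real deviation is in the final assertion, where you pass directly to the limit in $\Phi_{\tau_n}(x_n)\le\Phi_{\tau_n}(x)$ to conclude that the optimal values of $(\mathcal{P})$ and $(\mathcal{P}_{\tau_*})$ coincide and then invoke Lemma~\ref{lemma:ExactnessViaOptValue}, whereas the paper reaches the same conclusion by contradiction through the monotonicity $\Phi_{\tau_n}\le\Phi_{\tau_*}$; the two routes are equivalent in substance, and yours is arguably slightly more direct.
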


\begin{proof}
Let us first note that if the algorithmic pattern terminates after a finite number of iterations, then the last computed
point
$x_n$ is a globally optimal solution of the problem $(\mathcal{P})$. Indeed, by definition
Algorithmic Pattern~\ref{alg:ExPenAdaptive} terminates, if the point $x_n$ is feasible for the problem $(\mathcal{P})$.
Hence bearing in mind the facts that $x_n$ is a global minimizer of $\Phi_{\tau_n}$ on the set $Q$, and
$\Phi_{\tau}(x) = f(x)$ for any feasible point of the problem $(\mathcal{P})$, one gets that $x_n$ is a global minimizer
of the problem $(\mathcal{P})$.

Suppose now that Algorithmic Pattern~\ref{alg:ExPenAdaptive} generates an infinite sequence $\{ x_n \}$. Let us verify
that
$\varphi(x_n) \to 0$ as $n \to \infty$. Indeed, arguing by reductio ad absurdum, suppose that the sequence 
$\{ \varphi(x_n) \}$ does not converge to zero. We consider two cases.

\textbf{Case I.} Let $s_n \ge \gamma > 0$ for all $n \in \mathbb{N}$. Since the sequence $\{ \varphi(x_n) \}$ does not
converge to zero, there exist $\varepsilon > 0$, a subsequence $\{ \varphi(x_{n_k}) \}$, and an index 
$i \in \{ 1, \ldots, m \}$ such that $\varphi^{(i)}(x_{n_k}) \ge \varepsilon$ for all $k \in \mathbb{N}$, where
$\varphi^{(i)}(x)$ is the $i$-th coordinate of the vector $\varphi(x) \in \mathbb{R}^m$. Then according to
Algorithmic Pattern~\ref{alg:ExPenAdaptive} for all $k \in \mathbb{N}$ one has
\[
  \tau_{n_k + s}^{(i)} \ge \tau_1^{(i)} + k \gamma \varepsilon, \quad
  \tau_{n_k + s}^{(j)} \ge \tau_1^{(j)} \quad \forall j \ne i, \enspace \forall s \in \mathbb{N},
\]
which yields
\[
  \Phi_{\tau_{n_k}}(x_{n_k}) \ge \Phi_{\tau_1}(x_{n_k}) + (k - 1) \gamma \varepsilon \varphi^{(i)}(x_{n_k})
  \ge \Phi_{\tau_1}(x_1) + (k - 1) \gamma \varepsilon^2
\]
for all $k \in \mathbb{N}$. Therefore $\Phi_{\tau_{n_k}}(x_{n_k}) \to + \infty$ as $k \to \infty$, which contradicts the
fact that
\[
  f_* \ge \Phi_{\tau_{n_k}}(x_*) \ge \Phi_{\tau_{n_k}}(x_{n_k}) \quad \forall k \in \mathbb{N}
\]
for any globally optimal solution $x_*$ of the problem $(\mathcal{P})$.

\textbf{Case II.} Let $s_n \ge \gamma / \| \varphi(x_n) \| > 0$ for all $n \in \mathbb{N}$. By our assumption there
exist $\varepsilon > 0$ and a subsequence $\{ x_{n_k} \}$ such that $\| \varphi(x_{n_k}) \| \ge \varepsilon$. Since
$\varphi(x)$ is an $m$-dimensional vector with non-negative coordinates, there exist a subsequence, which we denote
again by $\{ x_{n_k} \}$, and an index $i \in \{ 1, \ldots, m \}$ such that $\varphi^{(i)}(x_{n_k})$ is the greatest
coordinate of the vector $\{ \varphi(x_{n_k}) \}$ for any $k \in \mathbb{N}$. 

Let $C > 0$ be such that $\| y \| \le C \| y \|_{\infty}$ for all $y \in \mathbb{R}^m$. Then bearing in mind the fact
that by definition $\varphi^{(i)}(x_{n_k}) = \| \varphi(x_{n_k}) \|_{\infty}$ one obtains that
$\varphi^{(i)}(x_{n_k}) / \| \varphi(x_{n_k}) \| \ge 1 / C$ for all $k \in \mathbb{N}$, which yields
\[
  \tau_{n_k + s}^{(i)} \ge \tau_1^{(i)} + k \frac{\gamma}{C}, \quad
  \tau_{n_k + s}^{(j)} \ge \tau_1^{(j)} \quad \forall j \ne i, \enspace \forall s \in \mathbb{N}.
\]
Hence with the use of the inequality 
\[
  \varphi^{(i)}(x_{n_k}) = \| \varphi(x_{n_k}) \|_{\infty} \ge \frac{1}{C} \| \varphi(x_{n_k}) \|
  \ge \frac{\varepsilon}{C}  \quad \forall k \in \mathbb{N}
\]
one obtains that for all $k \in \mathbb{N}$ the following inequalities hold true
\[
  \Phi_{\tau_{n_k}}(x_{n_k}) \ge \Phi_{\tau_1}(x_{n_k}) + (k - 1) \frac{\gamma}{C} \varphi^{(i)}(x_{n_k})
  \ge \Phi_{\tau_1}(x_1) + (k - 1) \gamma \frac{\varepsilon}{C^2},
\]
which just like in the first case leads to an obvious contradiction.

Let us now check that limit points of the sequence $\{ x_n \}$ are globally optimal solutions of 
the problem $(\mathcal{P})$. Indeed, let $x_*$ be a limit point of this sequence, i.e. there exists a subsequence 
$\{ x_{n_k} \}$ converging to $x_*$. Since the set $Q$ is closed and the function $\| \varphi(\cdot) \|$ is l.s.c. on
$Q$, one obtains that $x_*$ is a feasible point of the problem $(\mathcal{P})$ (recall that $\varphi(x) = 0$ iff 
$x \in M$). 

Observe that for any globally optimal solution $z_*$ of the problem $(\mathcal{P})$ and for all $n \in \mathbb{N}$ one
has
\[
  f_* := f(z_*) = \Phi_{\tau_n}(z_*) \ge \min_{x \in Q} \Phi_{\tau_n}(x) = \Phi_{\tau_n}(x_n) \ge f(x_n).
\]
Therefore $f(x_{n_k}) \le f_*$ for all $k \in \mathbb{N}$. Passing to the limit as $k \to \infty$, one gets 
$f(x_*) \le f_*$, that is, $x_*$ is a globally optimal solution of the problem $(\mathcal{P})$.

Suppose, finally, that the sequence $\{ \tau_n \}$ is bounded. By definition each coordinate $\tau_n^{(i)}$ is
nondecreasing in $n$. Therefore the sequence $\{ \tau_n \}$ converges to some $\tau_*$. Arguing by reductio ad absurdum,
suppose that there exists a vector $\tau \succeq_{K^*_+} \tau_*$ that is not a global exact penalty parameter.
Then by Lemma~\ref{lemma:ExactnessViaOptValue} one has $f_* > \inf_{x \in Q} \Phi_{\tau_*}(x)$. Hence taking into
account the fact that the sequence $\{ \tau_n \}$ is coordinate-wise nondecreasing one obtains that
\[
  f_* > \inf_{x \in Q} \Phi_{\tau_*}(x) \ge \inf_{x \in Q} \Phi_{\tau_n}(x) = \Phi_{\tau_n}(x_n) = f(x_n)
\]
for all $n \in \mathbb{N}$. Therefore $\limsup_{n \to \infty} f(x_n) < f_*$, which contradicts the fact that limit
points of the sequence $\{ x_n \}$, which exist by our assumption, are globally optimal solutions of the problem
$(\mathcal{P})$. Thus, any $\tau \succeq_{K^*_+} \tau_*$ is a global exact penalty parameter, and the penalty function
$\Phi_{\tau}$ is globally exact.
\end{proof}

\begin{remark}
From the theorem above it follows that if the penalty function $\Phi_{\tau}$ is \textit{not} globally exact, then
either a sequence $\{ x_n \}$ generated by Algorithmic Pattern~\ref{alg:ExPenAdaptive} has no limit points or the
corresponding
sequence of penalty parameters $\{ \tau_n \}$ is unbounded. Thus, if the sequence $\{ x_n \}$ has limit points, then the
global exactness of the penalty function $\Phi_{\tau}$ is a necessary condition for the boundedness of the sequence of
penalty parameters. Whether this condition is also sufficient for the boundedness of the sequence $\{ \tau_n \}$ is an
open problem.
\end{remark}

Let us finally show that under some natural assumptions a simple combination of the penalty updates from
Algorithmic Patterns~\ref{alg:SimplestExPen} and \ref{alg:ExPenAdaptive}, on the one hand, guarantees a finite
convergence to a
globally optimal solution of the problem $(\mathcal{P})$, but on the other hand, preserves all practical benefits of
the adaptive penalty updates from Algorithmic Patterns~\ref{alg:ExPenAdaptive}. Furthermore, one can prove the finite
convergence
of the algorithmic pattern with combined penalty updates (see Algorithmic Pattern~\ref{alg:ExPenCombined}) without
imposing any assumptions
on the space $Y$ and the cone $K$.

\begin{algorithm}[ht!]	\label{alg:ExPenCombined}
\caption{Global Exact Penalty Method with Combined Penalty Updates}

\noindent\textbf{Initial data.} {Choose $\tau_1 \in \mathcal{T}$ and some small $\delta > 0$, and set $n := 0$.}

\noindent\textbf{Main Step.} {Set the value of $x_n$ to a globally optimal solution of the penalized problem
\[
  \text{minimize} \enspace \Phi_{\tau_n}(x) \quad \text{subject to} \quad x \in Q.
\]
If $x_n$ is feasible for the problem $(\mathcal{P})$, \textbf{Stop}. Otherwise, choose a scaling coefficient $s_n > 0$,
put $\tau_{n + 1} = \tau_n + \delta \tau_1 + s_n i(\varphi(x_n))$ and $n := n + 1$, and repeat the \textbf{Main Step}.
}
\end{algorithm}

Let us note that penalty updates similar to the ones used in Algorithmic Pattern~\ref{alg:ExPenCombined} were studied in
\cite{BurachikKayaPrice} for a smoothing approximation of an $\ell_1$ penalty function for nonlinear programming
problems.

\begin{theorem}
Let the set $Q$ be closed, the functions $f$ and $\| \varphi(\cdot) \|$ be l.s.c. on $Q$, and 
$\tau_1 \in \mathcal{T} \cap K^*_{++}$. Suppose also that the penalty function $\Phi_{\tau}$ is locally exact at every
globally optimal solution $x_*$ of the problem $(\mathcal{P})$. Then Algorithmic Pattern~\ref{alg:ExPenCombined} either
terminates after a finite number of iterations by finding a globally optimal solution of the problem $(\mathcal{P})$
or generates an infinite sequence $\{ x_n \}$ that has no limit points.
\end{theorem}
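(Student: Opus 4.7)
The plan is to follow the template of the proof of Theorem~\ref{thrm:SimplestExPenConvergence}, adapting it to accommodate the adaptive component $s_n i(\varphi(x_n))$ in the penalty update. Finite termination is immediate: if $x_n$ is feasible, then it is a global minimizer of $\Phi_{\tau_n}$ on $Q$ and $\Phi_{\tau_n}$ coincides with $f$ on feasible points, so $x_n$ solves $(\mathcal{P})$.

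The key structural observation is that, because $i(\varphi(x_j)) \in K^*$ and $\delta\tau_1$ is added at every step,
\[
  \tau_n = c_n \tau_1 + \sum_{j=1}^{n-1} s_j i(\varphi(x_j)), \qquad c_n := 1 + (n-1)\delta,
\]
so $\tau_n \succeq_{K^*} c_n \tau_1$ with $c_n \to +\infty$, and in particular $p_K(\tau_n) \ge c_n p_K(\tau_1)$. Consequently $\Phi_{\tau_n}(x) \ge \Phi_{c_n \tau_1}(x)$ for every $x$, which lets the scalar-case estimates behind Lemma~\ref{lemma:ExPenMinimizingSeq} be carried over to the adaptive setting.

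Arguing by contradiction, assume the algorithmic pattern runs indefinitely and some subsequence $x_{n_k} \to x_*$. Combining $\Phi_{\tau_{n_k}}(x_{n_k}) \le \Phi_{\tau_{n_k}}(z_*) = f_*$ (for any globally optimal $z_*$) with $\langle \tau_{n_k}, \varphi(x_{n_k})\rangle \ge c_{n_k} p_K(\tau_1)\|\varphi(x_{n_k})\|$ and lower semicontinuity of $f$ near $x_*$ (which bounds $f(x_{n_k})$ from below), I would deduce $\|\varphi(x_{n_k})\| \to 0$. Then l.s.c.\ of $\|\varphi(\cdot)\|$ and closedness of $Q$ give $x_* \in M \cap Q$, and $f(x_*) \le \liminf_k f(x_{n_k}) \le f_*$ makes $x_*$ globally optimal. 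Local exactness of $\Phi_{\tau}$ at $x_*$ together with Remark~\ref{rmrk:PenaltyParameterMultiple} yields $c(x_*) > 0$ and a neighbourhood $U$ of $x_*$ such that $\Phi_{c\tau_1}(x) \ge f_*$ for all $x \in U \cap Q$ and $c \ge c(x_*)$; for $k$ large one has $x_{n_k} \in U$ and $c_{n_k} \ge c(x_*)$, hence $\Phi_{\tau_{n_k}}(x_{n_k}) \ge \Phi_{c_{n_k} \tau_1}(x_{n_k}) \ge f_*$, and so $\Phi_{\tau_{n_k}}(x_{n_k}) = f_*$.

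The delicate step, and the genuine obstacle, is closing the argument: here the stopping rule is feasibility of $x_n$, not equality of successive objective values, so one must actually force $\varphi(x_{n+1}) = 0$ for some $n$. I would exploit that $\Phi_{\tau_n}(x_n)$ is nondecreasing (from $\tau_{n+1} \succeq_{K^*} \tau_n$ and the minimizing property of $x_n$), so $\Phi_{\tau_n}(x_n) = f_*$ for every $n \ge n_k$ with $k$ large enough. The sandwich
\[
  f_* = \Phi_{\tau_n}(x_n) \le \Phi_{\tau_n}(x_{n+1}) \le \Phi_{\tau_{n+1}}(x_{n+1}) \le \Phi_{\tau_{n+1}}(z_*) = f_*
\]
then forces $\langle \tau_{n+1} - \tau_n, \varphi(x_{n+1})\rangle = 0$. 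The guaranteed additive piece $\delta\tau_1$ pays off at this point: since $\tau_{n+1} - \tau_n - \delta\tau_1 = s_n i(\varphi(x_n)) \in K^*$, one has $0 = \langle \tau_{n+1} - \tau_n, \varphi(x_{n+1})\rangle \ge \delta\langle \tau_1, \varphi(x_{n+1})\rangle \ge 0$, and because $\tau_1 \in K^*_{++} \subset K^*_+$ and $\varphi(x_{n+1}) \in K$, this forces $\varphi(x_{n+1}) = 0$. Thus $x_{n+1}$ is feasible, which contradicts the assumption that the algorithmic pattern never terminates.
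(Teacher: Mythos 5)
Your proof is correct and follows essentially the same route as the paper's: the guaranteed $(1+n\delta)\tau_1$ component of $\tau_n$ together with $p_K(\tau_1)>0$ forces $\varphi(x_n)\to 0$, the limit point is then shown to be globally optimal, local exactness plus Remark~\ref{rmrk:PenaltyParameterMultiple} pins $\Phi_{\tau_n}(x_n)=f_*$ for large $n$, and the $\delta\tau_1$ increment with $\tau_1\in K^*_+$ forces feasibility of the next iterate, contradicting non-termination. The only cosmetic deviations are that you bound $f(x_{n_k})$ from below via lower semicontinuity at the limit point rather than via $\Phi_{\tau_1}(x_{n+1})\ge\Phi_{\tau_1}(x_1)$, and you extract $\langle\tau_{n+1}-\tau_n,\varphi(x_{n+1})\rangle=0$ from a sandwich instead of directly noting that infeasible points have $\Phi_{\tau_{n+1}}$-value strictly above $f_*$.
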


\begin{proof}
Arguing in the same way as in the proof of Theorem~\ref{thrm:AdaptiveExPenConvergence}, one can easily check that if the
method terminates after a finite number of iterations, then the last computed point is a globally optimal solution
of the problem $(\mathcal{P})$. Therefore, arguing by reductio ad absurdum, suppose that
Algorithmic Pattern~\ref{alg:ExPenCombined} generates an infinite sequence $\{ x_n \}$ that has a limit point $x_*$.

Let us check that $\varphi(x_n) \to 0$ as $n \to \infty$. Indeed, according to the penalty updating rule from
Algorithmic Pattern~\ref{alg:ExPenCombined} for any $n \in \mathbb{N}$ one has
\begin{multline*}
  \Phi_{\tau_{n + 1}}(x_{n + 1}) \ge f(x_{n + 1}) + (1 + n \delta) \langle \tau_1, \varphi(x_{n + 1}) \rangle
  \\
  \ge \Phi_{\tau_1}(x_{n + 1}) + n \delta p_K(\tau_1) \| \varphi(x_{n + 1}) \|
  \ge \Phi_{\tau_1}(x_1) + n \delta p_K(\tau_1) \| \varphi(x_{n + 1}) \|.
\end{multline*}
Consequently, if the sequence $\{ \varphi(x_n) \}$ does not converge to zero, then one has
$\limsup_{n \to \infty} \Phi_{\tau_n}(x_n) = + \infty$, which contradicts the fact that
\[
  f(x_*) = \Phi_{\tau_n}(x_*) \ge \Phi_{\tau_n}(x_n) \quad \forall n \in \mathbb{N}
\]
for any globally optimal solution $x_*$ of the problem $(\mathcal{P})$.

Utilising the fact that $\varphi(x_n) \to 0$ as $n \to \infty$ and arguing in the same way as in the proof of 
Theorems~\ref{thrm:SimplestExPenConvergence} and \ref{thrm:AdaptiveExPenConvergence} one can check that $x_*$ is a
globally optimal solution of the problem $(\mathcal{P})$. Therefore, by our assumption the penalty function
$\Phi_{\tau}$ is locally exact at $x_*$, while by Remark~\ref{rmrk:PenaltyParameterMultiple} there exists $c(x_*) > 0$
such that $c(x_*) \tau_1$ is a local exact penalty parameter of $\Phi_{\tau}$ at $x_*$. Consequently, there exists a
neighbourhood $U$ of $x_*$ such that
\[
  \Phi_{\tau}(x) \ge \Phi_{\tau}(x_*) = f(x_*) \quad 
  \forall x \in U \cap Q, \enspace \tau \succeq_{K^*} c(x_*) \tau_1.
\]
Observe that according to Algorithmic Pattern~\ref{alg:ExPenCombined} one has 
\[
  \tau_{n + 1} = (1 + n \delta) \tau_1 + \sum_{i = 1}^n s_n i(\varphi(x_n)) \quad \forall n \in \mathbb{N}
\]
and by our assumption $i(\varphi(x_n)) \in K^*$. Therefore there exists $n_0 \in \mathbb{N}$ such that $\tau_n
\succeq_{K^*} c(x_*) \tau_1$ for all $n \ge n_0$ ($n_0$ must satisfy the inequality $1 + n_0 \delta \ge c(x_*)$).
Moreover, from the fact that $x_*$ is a limit point of
the sequence $\{ x_n \}$ it follows that $x_n \in U$ for some $n \ge n_0$. Consequently, for any such $n$ one has
\[
  \min_{x \in Q} \Phi_{\tau_n}(x) =: \Phi_{\tau_n}(x_n) \ge \Phi_{\tau_n}(x_*) = f_*.
\]
Recall that $\tau_1 \in K^*_{++}$. Therefore 
\[
  \Phi_{\tau_{n + 1}}(x) = \Phi_{\tau_n}(x) + \delta \langle \tau_1, \varphi(x) \rangle + 
  s_n \langle i(\varphi(x_n)), \varphi(x) \rangle > \Phi_{\tau_n}(x) \ge f_*
\]
for any point $x$ that is infeasible for the problem $(\mathcal{P})$. On the other hand, 
$\Phi_{\tau_{n + 1}}(x_*) = f_*$ for any globally optimal solution $x_*$ of the problem $(\mathcal{P})$. Therefore, the
point
$x_{n + 1}$ must be a globally optimal solution of the problem $(\mathcal{P})$, which contradicts our assumption that 
Algorithmic Pattern~\ref{alg:ExPenCombined} does not terminate after a finite number of iterations.
\end{proof}

\begin{remark}
Let us note that the finite convergence to a globally optimal solution of the problem $(\mathcal{P})$ can be proved, if
the penalty updates from Algorithmic Pattern~\ref{alg:ExPenAdaptive} are corrected as in Algorithmic
Pattern~\ref{alg:ExPenCombined} only
once every certain number of iterations, that is, the following penalty updates are used
\[
  \tau_{n + 1} = \begin{cases}
    \tau_n + \delta \tau_1 + s_n i(\varphi(x_n), & \text{ if } n = k \ell \text{ for some } k \in \mathbb{N},
    \\
    \tau_n + s_n i(\varphi(x_n)), & \text{ otherwise.}
  \end{cases}
\]
for some fixed $\ell \in \mathbb{N}$. In other words, one adds a small correction $\delta \tau_1$ to the penalty
updates from Algorithmic Pattern~\ref{alg:ExPenAdaptive} only every $\ell$ iterations. Then one has
$\Phi_{\tau_{n + 1}}(x_{n + 1}) \ge f(x_{n + 1}) + (1 + k \delta) \langle \tau_1, \varphi(x_{n + 1}) \rangle$ for any 
$n, k \in \mathbb{N}$ such that $n \ge k \ell$. Arguing in the same way as in the proof of the previous theorem
and applying the inequality above, one can easily prove that the corresponding algorithmic pattern either terminates
after a
finite number of iterations by finding globally optimal solution of the problem $(\mathcal{P})$ or generates an
infinite sequence that has no limit points.
\end{remark}

\begin{remark}
Although in this section we analysed penalty updating strategies only in the context of global exactness and
corresponding global optimization methods (cf.~\cite{DiPilloLucidiRinaldi}), which cannot be implemented and applied to 
practical problems directly, such penalty updating strategies can be applied to local optimization methods as well.
Namely, one can apply penalty updates from Algorithmic Patterns~\ref{alg:ExPenAdaptive} and \ref{alg:ExPenCombined}
after each iteration of a local optimization method using exact penalty functions. Alternatively, instead of computing
$x_{n + 1}$ in Algorithmic Patterns~\ref{alg:ExPenAdaptive} and \ref{alg:ExPenCombined} as a global minimizer of the
penalty function, one can define $x_{n + 1}$ as a local minimizer/stationary point of this function. As is demonstrated
by numerical experiments in \cite{BurachikKayaPrice}, even such naive ``local'' implementation of \textit{global} exact
penalty methods significantly outperforms some state-of-the-art optimization solvers, both in terms of computation time
and quality of computed local minimizers. Thus, although the algorithmic patterns discussed in this paper cannot be
implemented and applied directly, they can be used as a foundation for the development of efficient optimization
methods.
\end{remark}


\bibliographystyle{abbrv}  
\bibliography{Dolgopolik_bibl}

\end{document}